\documentclass[11pt,letterpaper]{amsart}

\usepackage[T1]{fontenc}
\usepackage[utf8]{inputenc}
\usepackage{amsmath,amssymb,amsthm,mathtools}
\usepackage{mathrsfs}
\usepackage{enumitem}
\usepackage[colorlinks=true,linkcolor=blue,citecolor=blue,urlcolor=blue]{hyperref}

\makeatletter
\@namedef{subjclassname@2020}{\textup{2020} Mathematics Subject Classification}
\makeatother

\mathtoolsset{showonlyrefs=true}

\newtheorem{theorem}{Theorem}[section]
\newtheorem{theoremletter}{Theorem}
\newtheorem{proposition}[theorem]{Proposition}
\newtheorem{lemma}[theorem]{Lemma}

\theoremstyle{definition}

\newtheorem*{conjecture}{Conjecture}

\theoremstyle{remark}
\newtheorem{remark}[theorem]{Remark}

\numberwithin{equation}{section}

\newcommand{\Sph}{\mathbb S}

\newcommand{\vol}{\operatorname{Vol}}
\newcommand{\area}{\operatorname{Area}}
\newcommand{\Ric}{\operatorname{Ric}}

\newcommand{\wt}{\widetilde}

\newcommand{\barX}{\overline X}

\title[]{A sharp relative comparison inequality for conformal fill-ins of Poincar\'e--Einstein manifolds}
\author[N. Wu]{Nan Wu}
\address[N. Wu]{School of Mathematics \& IMS \\ Nanjing University\\ Nanjing \\ 210093 \\ P.R. China}
\email{nwu2@nju.edu.cn}

\subjclass[2020]{Primary 53C18; Secondary 53C21}
\keywords{Poincar\'e--Einstein manifolds; conformal infinity;
Escobar--Yamabe compactification; sharp comparison
inequality.}

\date{\today}

\begin{document}
\begin{abstract}
Let $(X^{n+1},g_+)$ be a Poincar\'e--Einstein manifold with conformal infinity $(M^n,[h])$ of positive Yamabe constant $Y(M,[h])$. Then a relative comparison inequality
\[
 \frac{Y_1(X,M,[\bar g])}{Y_1(\Sph^{n+1}_+,\Sph^n,[g_{\Sph_+^{n+1}}])}
 \ge
 \left(\frac{Y(M,[h])}{Y(\Sph^n,[g_{\Sph^n}])}\right)^{\frac{n}{n+1}}
\]
 holds for the \textbf{type-I} Escobar--Yamabe compactification $Y_1(X,M,[\bar g]$, with equality if and only if $\left(X, g_{+}\right)$ is isometric to the hyperbolic space $\left(\mathbb{H}^{n+1}, g_{\mathbb{H}^{n+1}}\right)$. This confirms a conjecture raised by Sun-Yung A. Chang.
\end{abstract}

\maketitle

\tableofcontents

\section{Introduction}\label{Sect:Introduction}

The conformal fill-in problem asks whether a prescribed conformal manifold can occur as the conformal infinity of a Poincar\'e--Einstein manifold.  Let $X^{n+1}$ be the interior of a smooth compact manifold $\barX$ with boundary $M^n=\partial X$. A complete metric $g_+$ on $X$ is called Poincar\'e--Einstein if
$\Ric_{g_+}=-ng_+$, and if there is a defining function $\rho$ for $M$ such that
$\bar g=\rho^2g_+$ extends smoothly to $\barX$.  The conformal class
\[
  [h]:=[\bar g|_{TM}]
\]
is independent of the choice of defining functions and is called the conformal infinity of $g_+$.  The global fill-in problem is the following: given a compact manifold $X$ with boundary $M$ and a conformal class $[h]$ on $M$, does there exist a Poincar\'e-Einstein metric $g_+$ on $X$ whose conformal infinity is $[h]$?

This problem has both local and global aspects. Near the boundary, the Fefferman--Graham expansion \cite{Fefferman1985conformal} describes the asymptotic structure of a conformally compact Einstein metric.  In a geodesic normal form, we write
\[
  g_+=r^{-2}(dr^2+h_r),
\]
and the Einstein equation determines the Taylor expansion of $h_r$ recursively, up to the familiar formally undetermined transverse-traceless term at order $n$.
If $M$ is real analytic, then there always exists a
Poincaré-Einstein  metric defined on a collar neighborhood $M\times(0, 1]$. This was proved when $M$ is odd-dimensional by Fefferman and Graham \cite{Fefferman1985conformal}, and in the even-dimensional
case by Kichenassamy \cite{Kichenassamy2004}.
Gursky and Sz\'ekelyhidi \cite{GurskySzekelyhidi} proved the same local collar existence theorem in the smooth setting. These results denmonstrate that the local boundary problem is flexible. 

By contrast, the global fill-in problem is substantially more rigid. There are several positive results in this direction.  Graham and Lee proved that conformal classes sufficiently close to the round class on $\Sph^n$ are filled by Poincar\'e--Einstein metrics on the ball \cite{GrahamLee}.  Lee \cite{LeeFredholm} developed the Fredholm deformation theory for nondegenerate conformally compact Einstein metrics.  In dimension four, Anderson \cite{AndersonTopics} studied the boundary map from the moduli space of conformally compact Einstein metrics to the space of conformal infinities and developed a degree-theoretic approach.  

Nevertheless, global obstructions can occur. A striking theorem of Gursky and Han \cite{GurskyHan}, motivated in part by Witten \cite{Witten}, states that there exist infinitely many conformal classes on $\Sph^7$ with positive Yamabe constant that cannot be realized as conformal infinities of Poincar\'e--Einstein metrics in $B^8$. Such a result is especially significant because the obstructed conformal classes have positive Yamabe constant. So, the positivity of Yamabe constant of the conformal infinity is in general not sufficient for the global fill-in. Gursky--Han--Stolz  \cite{GurskyHanStolz} later introduced a spin-cobordism obstruction for conformally compact Einstein fill-ins in dimensions $4k$, $k\in \mathbb{N}$.  There are many related works on existence, compactness, renormalized volume, and comparison results for the conformal fill-in problem, see \cite{ChangGeCompactness,ChangGefilling,ChangGeQing,Chang_2024,Chang2025Poincare,LiQingShi,lee2025rigidity,qing2003rigidity,WangWang,wang2025sharp,wang2023note,wang2026lower} and references therein.

One among important consequences of Gursky and Han \cite{GurskyHan} is the inequality relating the Yamabe constant of conformal infinity and \textbf{type-I} Escobar-Yamabe constant of the compactification.  Such an inequality gives a necessary condition for the existence of fill-ins and is sharp on the hyperbolic ball. To be precise, let $\barX^N$ be a smooth compact manifold with nonempty boundary $M^n$, with $N=n+1$.  For a smooth metric $\bar g$ on $\barX$, define the \textbf{type-I} Escobar--Yamabe constant by
\begin{equation}\label{eq:Y1-def}
 Y_1(\barX,M,[\bar g])
 =\inf_{\wt g\in[\bar g]}
 \frac{\displaystyle\int_XR_{\wt g}\,dv_{\wt g}+2\int_MH_{\wt g}\,d\sigma_{\wt g}}
 {\vol_{\wt g}(X)^{\frac{N-2}{N}}}.
\end{equation}
The Yamabe constant for the conformal infinity is defined by
\begin{equation}
Y(M,[h])=\inf _{\hat{h} \in[h]} \frac{\int_{M} R_{\hat{h}} \mathrm{~d} \sigma_{\hat{h}}}{\left(\int_{M} \mathrm{~d} \sigma_{\hat{h}}\right)^{\frac{n-2}{n}}}.
\end{equation}
We also introduce the isoperimetric ratio
\begin{equation}
I(\barX, M, \bar{g})=\frac{\operatorname{Vol}(M, \gamma)^{N}}{\operatorname{Vol}(\barX, \bar{g})^{N-1}},
\end{equation}
where $\gamma=\left.\bar{g}\right|_M$. In particular, 
$$Y(\Sph^n)=n(n-1)\vol(\Sph^n)^{2/n},\quad Y_1(\Sph^{N}_+,\Sph^n,[g_{\Sph_+^{N}}])=N(N-1)\vol(\Sph^{N}_+)^{\frac{2}{N}}.$$

 Gursky and Han \cite{GurskyHan} established the following inequality.
\begin{theoremletter}[Gursky-Han]\label{thm:Gursky Han}
Let $\left(X^N, g_{+}\right)$be a Poincaré-Einstein manifold satisfying solvability hypotheses for \textbf{type-I} Escobar-Yamabe compactification. Let $(X, M, \bar{g})$ denote its Escobar-Yamabe compactification, and let $\gamma=\left.\bar{g}\right|_M$ be the induced metric on $M=\partial X$.
If the dimension $N \geq 4$, then
\begin{equation}\label{eq:Gursky Han 1}
Y_1(\barX, M,[\bar{g}]) \cdot I(\barX, M, \bar{g})^{\frac{2}{N(N-1)}} \geq \frac{N}{N-2} Y(M,[\gamma]) .
\end{equation}
If the dimension $N=3$, then
\begin{equation}\label{eq:Gursky Han 2}
Y_1(\barX, M,[\bar{g}]) \cdot I(\barX, M, \bar{g})^{1 / 3} \geq 12 \pi \chi(M). 
\end{equation}
If equality occurs, then $\bar{g}$ is Einstein and $\gamma$ has constant scalar curvature.
\end{theoremletter}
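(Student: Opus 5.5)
The plan is to realize $Y_1(\barX,M,[\bar g])$ by a special defining function and then extract the inequality from the contracted Bianchi identity applied to the Einstein equation for $g_+$. The solvability hypothesis means the infimum in \eqref{eq:Y1-def} is attained by a compactification $\bar g=\rho^2 g_+$ with $R_{\bar g}\equiv R$ constant and $H_{\bar g}\equiv 0$ on $M$. Hence
\[
Y_1(\barX,M,[\bar g])=R\,\vol_{\bar g}(X)^{2/N},
\]
and $R>0$ in the case of interest. Writing $g_+=\rho^{-2}\bar g$ and using $\Ric_{g_+}=-(N-1)g_+$, the conformal transformation law gives two pieces of information:
\[
E_{\bar g}=-(N-2)\rho^{-1}\Big(\nabla^2\rho-\tfrac{\Delta\rho}{N}\bar g\Big),
\]
where $E$ is the traceless Ricci tensor, and a trace identity. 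From the trace identity, the facts that $\rho$ is a defining function and the left-hand side is bounded give $|\nabla\rho|_{\bar g}=1$ on $M$. Consequently the outward unit normal is $\nu=-\nabla\rho$ on $M$. I would also check, using the second-order expansion together with $H=0$, that the second fundamental form of $M$ vanishes. In any case only $H=0$ is needed in the Gauss equation below.

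Next comes the divergence identity. Since $R$ is constant, the contracted Bianchi identity gives $\operatorname{div}E=\frac{N-2}{2N}dR=0$. Therefore
\[
\operatorname{div}\big(E(\nabla\rho,\cdot)\big)=\langle E,\nabla^2\rho\rangle=-\frac{\rho}{N-2}|E|^2 ,
\]
where the last equality uses the formula for $E$ and the tracelessness of $E$. Integrating over $X$ and using $\nu=-\nabla\rho$ on $M$ yields
\[
\int_M E(\nu,\nu)\,d\sigma=\frac1{N-2}\int_X\rho|E|^2\,dv\ \ge 0 .
\]
This says exactly that $\int_M\Ric_{\bar g}(\nu,\nu)\ge \frac RN\area(M)$. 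Now apply the contracted Gauss equation with $H=0$ (and $L=0$), which reads $R=R_\gamma+2\Ric(\nu,\nu)$ on $M$. It gives
\[
\int_M R_\gamma\,d\sigma\le \frac{N-2}{N}R\,\area(M).
\]

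For $N\ge4$, the definition of the Yamabe constant gives $Y(M,[\gamma])\area(M)^{\frac{n-2}{n}}\le\int_MR_\gamma$. Combining this with the previous bound and substituting $R=Y_1\vol^{-2/N}$, we obtain
\[
Y(M,[\gamma])\le\frac{N-2}{N}\,Y_1\,\area(M)^{2/n}\vol(X)^{-2/N}.
\]
Since $n=N-1$, the right-hand side equals $\frac{N-2}{N}Y_1 I^{\frac{2}{N(N-1)}}$, which is \eqref{eq:Gursky Han 1}. For $N=3$ we use Gauss--Bonnet, $\int_MR_\gamma=4\pi\chi(M)$, in place of the Yamabe inequality. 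This gives $12\pi\chi(M)\le R\,\area(M)=Y_1 I^{1/3}$, which is \eqref{eq:Gursky Han 2}.

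For the equality case, equality forces $\int_X\rho|E|^2=0$, so $E\equiv0$ because $\rho>0$ in $X$; thus $\bar g$ is Einstein. When $N\ge4$, equality also forces $\gamma$ to attain the Yamabe quotient, so $\gamma$ is a Yamabe metric and has constant scalar curvature. When $N=3$, the scalar curvature of $\gamma$ is $R-2\Ric(\nu,\nu)=R-\frac{2R}{3}$ once $E=0$, which is constant.

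The main obstacle is the boundary analysis. One must justify that $|\nabla\rho|=1$ on $M$, that the boundary term in the divergence theorem is exactly $E(\nu,\nu)$, and that the boundary is totally geodesic (or at least that $H=0$ suffices in the Gauss equation). This is where the regularity of the Escobar--Yamabe compactification up to $M$ enters.
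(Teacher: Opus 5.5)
Your proposal is correct and follows the route the paper indicates. Integrating $\operatorname{div}\big(E(\nabla\rho,\cdot)\big)=-\frac{\rho}{N-2}|E|^2$ (equivalently, integrating $-\Delta_g w$ with $dw=\frac{N}{N-2}E(\nabla\rho,\cdot)$ as in Lemma~\ref{lem:CE-identities}) and applying the Gauss equation on $M$ gives exactly the Gursky--Han identity \eqref{eq:GH-identity}, from which the inequality follows by discarding $\frac{2}{n-1}\int_X\rho|E|^2\,dv\ge 0$; your remark that $H=0$ suffices for the inequality is right since the omitted $|L|^2$ has the favorable sign, while $L=0$ (proved in Lemma~\ref{lem:CE-identities} from the tangential trace of $\nabla^2\rho-\frac{\Delta\rho}{N}g=0$ on $M$) is what your $N=3$ equality argument actually uses.
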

The  rigidity part was further proved by Chen--Lai--Wang
\cite{ChenLaiWang} that equality holds in
\eqref{eq:Gursky Han 1} and \eqref{eq:Gursky Han 2} if and only if
\((X,g_+)\) is isometric to 
\((\mathbb H^N,g_{\mathbb H^N})\).

\begin{remark}\label{rmk:main}
In \cite{GurskyHan}, an extra solvability assumption is imposed for
\textbf{type-I} Escobar--Yamabe compactification. This assumption would be 
removed, in combination of  the influential works on  the boundary Yamabe problem by Escobar \cite{Escobar}  and  Brendle--Chen
\cite{BrendleChen}, and the most recent work of Brendle--Wang \cite{BrendleWang} on the positive mass theorem in arbitrary dimensions.
\end{remark}

Using the inequalities \eqref{eq:Gursky Han 1} and
\eqref{eq:Gursky Han 2} in Gursky--Han \cite{GurskyHan} and
a sharp conformally invariant inequality for \textbf{type-II}
Escobar--Yamabe compactification as in
\cite{ChenLaiWang,WangWang}, Alice Chang obtained a non-sharp inequality relating $Y_1(\barX,M,[\bar g])$ and $Y(M,[h]$, and then proposed the following conjecture concerning another sharp conformally invariant inequality.
\begin{conjecture}[Chang Conjecture]
Let $(X^N,g_+)$, $N=n+1\ge 3$ be a smooth Poincar\'e--Einstein manifold with conformal infinity $(M^n,[h])$ such that $Y(M,[h])>0$. Then,
\begin{equation}\label{eq:main-conjecture}
\frac{Y_1(\barX,M,[\bar g])}{Y_1(\Sph^{n+1}_+,\Sph^n,[g_{\Sph_+^{n+1}}])}
 \ge
 \left(\frac{Y(M,[h])}{Y(\Sph^n,[g_{\Sph^n}])}\right)^{\frac{n}{n+1}},
\end{equation}    
with equality if and only if $\left(X^{N}, g_{+}\right)$ is isometric to $\left(\mathbb{H}^N, g_{\mathbb{H}^N}\right)$. 
\end{conjecture}
Chang and Ge \cite{ChangGefilling} studied the existence and compactness problem for the conformal fill-ins in dimension four and obtained the same type estimates for all dimensions $N \geq 4$ with non-sharp constants. 

Our contribution is to give a positive answer to the above conjecture.
\begin{theorem}\label{thm:main}
The Chang conjecture is true.
\end{theorem}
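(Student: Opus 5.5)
The plan is to derive the conjectured inequality \eqref{eq:main-conjecture} by combining two sharp estimates on a single well-chosen compactification. The first is a Gursky--Han type bound of $Y(M,[h])$ by the boundary data. The second is a new \emph{reverse} isoperimetric bound, which controls the area of $M$ by the volume of $X$ and the scalar curvature. Both should be saturated exactly by the hemisphere, and the rigidity then follows from Chen--Lai--Wang \cite{ChenLaiWang}.

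\textbf{Step 1 (normalization and the boundary estimate).} By Remark \ref{rmk:main} (Escobar, Brendle--Chen, Brendle--Wang), I take a type-I Escobar--Yamabe minimizer $\bar g=\rho^2 g_+$ with $R_{\bar g}\equiv R>0$ constant and $H_{\bar g}\equiv 0$. Then
\[
Y_1(\barX,M,[\bar g])=R\,V^{2/N},\qquad V=\vol_{\bar g}(X),\quad A=\area_\gamma(M).
\]
Since $g_+$ is Einstein, the boundary $M$ is umbilic, hence totally geodesic for $\bar g$. The Gauss equation together with the Einstein relation at $M$ (the pointwise computation underlying Theorem \ref{thm:Gursky Han}) should give
\[
\int_M R_\gamma\,d\sigma\le \frac{N-2}{N}\,R\,A .
\]
This yields $Y(M,[h])\le \frac{N-2}{N}R\,A^{2/n}$. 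After substituting this bound and rearranging, \eqref{eq:main-conjecture} reduces to the sharp reverse isoperimetric inequality
\[
A^2\le c_N\,R\,V^2,\qquad c_N=\frac{\vol(\Sph^n)^2}{N(N-1)\vol(\Sph^N_+)^2},
\]
with equality on the hemisphere.

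\textbf{Step 2 (reverse isoperimetry via the defining function).} Writing $\bar g=\rho^2g_+$ with $\Ric_{g_+}=-ng_+$, the standard conformal change identity gives
\[
R\,\rho=-2N\,\Delta_{\bar g}\rho+N(N+1)\,\rho^{-1}\bigl(1-|\nabla\rho|_{\bar g}^2\bigr).
\]
Moreover $|\nabla\rho|=1$ and $\partial_\nu\rho=-1$ on $M$, so integrating $\Delta\rho$ over $X$ gives $A=-\int_X\Delta_{\bar g}\rho$. The plan is to integrate this identity against suitable powers of $\rho$ and to use a maximum-principle or Bochner argument, in the spirit of the Lee/Qing comparison arguments, to show $|\nabla\rho|_{\bar g}\le 1$. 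Together with the Cauchy--Schwarz inequality, these should bound $A$ by $\sqrt{R}\,V$ times the model constant. On the hemisphere $\rho$ is the height function, so every inequality used is an equality there, which is consistent with sharpness.

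\textbf{Main obstacle.} I expect the hard step to be obtaining the \emph{sharp} constant in Step 2. Concretely, this means establishing the gradient bound $|\nabla\rho|\le1$, or a suitable replacement such as an Obata/Reilly-type integral inequality for $\rho$, for an arbitrary Poincar\'e--Einstein metric with $Y(M,[h])>0$. If the pointwise bound fails, my fallback is Reilly's formula applied to $\rho$, using $H=0$ and the Ricci lower bound of $\bar g$ coming from the Einstein condition. For rigidity: equality forces equality in Step 1, i.e.\ in Theorem \ref{thm:Gursky Han}. Then $\bar g$ is Einstein and $\gamma$ has constant scalar curvature, and Chen--Lai--Wang \cite{ChenLaiWang} identifies $(X,g_+)$ with $\mathbb H^N$. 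The three-dimensional case requires the Gauss--Bonnet version \eqref{eq:Gursky Han 2} in Step 1, with the same Step 2.
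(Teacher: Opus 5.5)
Your Step~1 reduction is arithmetically correct, but it reduces the conjecture to a false statement. After normalizing $R=N(N-1)$, the inequality $A^2\le c_NRV^2$ you need in Step~2 reads $A\le C_NV$, with $C_N=\vol(\Sph^n)/\vol(\Sph^N_+)$. For a Poincar\'e--Einstein filling the \emph{opposite} inequality holds.

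To see this, set $w=-(\Delta_g\rho+N\rho)$. The conformal Einstein equations give $|\nabla\rho|^2=1-\rho^2-\frac{2\rho}{N}w$ and $-\Delta_g w=\frac{N}{(N-2)^2}\rho|E|^2\ge0$, with $w|_M=0$, so $w\ge0$. Apply the divergence theorem to $\Psi_N(\rho)=\Phi_N(\arcsin\rho)$, which is the hemisphere calibration transplanted via $\rho$. This yields $A-C_NV=\int_X w\,\beta_N(\rho)\,dv_g$ with $\beta_N>0$. Hence $A\ge C_NV$, with equality only if $w\equiv0$. That forces $E\equiv0$, and then $(X,g_+)$ is hyperbolic.

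So your Step~2 inequality fails strictly for every non-hyperbolic filling. If it held, it would force $A=C_NV$ and hence hyperbolicity of every such filling. Your own ingredients already point this way: $A=-\int_X\Delta\rho=N\int_X\rho+\int_Xw$. The gradient bound $|\nabla\rho|\le1$ you aim for is itself a consequence of $w\ge0$, which increases $A$ rather than decreasing it.

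Incidentally, your displayed conformal identity is wrong. It should read $R\rho=-2(N-1)\Delta\rho-N(N-1)\rho^{-1}(1-|\nabla\rho|^2)$, as the check $\rho=\sin s$ on the hemisphere confirms. Consequently, no Reilly- or Obata-type reverse isoperimetric inequality can close Step~2. Chaining two estimates that are each saturated by the hemisphere cannot work here, because the two estimates are lossy in opposite directions.

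The missing idea is that you must keep the Gursky--Han error term rather than drop it. The identity $n(n-1)A=\int_MR_\gamma\,d\sigma_\gamma+\frac{2}{n-1}Q$, with $Q=\int_X\rho|E|^2\,dv_g$, has slack exactly $\frac{2}{n-1}Q$. The conjecture holds because this slack dominates the area--volume excess $A-C_NV$.

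The paper proves this using a one-dimensional supersolution $f_N(\rho)$ with $f_N(0)=0$ and $-\Delta_gf_N(\rho)=\beta_N(\rho)+wP_N[f_N](\rho)\ge\beta_N(\rho)$. Integrating by parts against $w$ gives
$\int_Xw\beta_N(\rho)\le\frac{N}{(N-2)^2}\int_Xf_N(\rho)\rho|E|^2\le\frac{Nf_N(1)}{(N-2)^2}Q$.
For $N=3$ this gives $A-C_3V\le 3f_3(1)Q<Q$, and for $N=4$ it gives $A-C_4V\le\frac{5}{32}Q<\frac16Q$, which suffice. For $N\ge5$ the linear bound is too weak when $C_NV/A$ is small. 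One must also retain the quadratic term $\int_XP_N(\rho)w^2$ and combine it with Cauchy--Schwarz and the pointwise bound $\beta_N^2\le2C_NP_N$, which produces the nonlinear estimate $\frac{2Q}{n(n-1)^2A}\ge\frac1n\bigl(\frac1s-s\bigr)$ with $s=C_NV/A$.

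Your rigidity plan, via equality in Gursky--Han plus Chen--Lai--Wang, is therefore moot. In the correct argument, equality forces $Q=0$, hence $w\equiv0$ and $\nabla^2\rho=-\rho g$, and Obata's theorem on the double finishes.
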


We describe the strategy of the proof of Theorem \ref{thm:main} while demonstrating the relation with Gursky--Han \cite{GurskyHan}.  Throughout the paper, let $g=\rho^2g_+$ be the \textbf{type-I} Escobar-Yamabe compactification, normalized as
\[
  R_g=N(N-1),\qquad H_g=0.
\]
Set
\[
  V=\vol_g(X),\qquad A=\vol_g(M),\qquad \gamma=g|_{TM}.
\]
Then,
\[
  Y_1(\barX,M,[g])=N(N-1)V^{2/N}.
\]
Gursky and Han's identity \cite[(2.15)]{GurskyHan} leads to
\begin{equation}\label{eq:GH-identity}
 n(n-1)A =\int_X R_g dv_g=\int_M R_\gamma\,d\sigma_\gamma+\frac{2}{n-1}\int_X\rho |E_g|^2\,dv_g,
\end{equation}
where
\[E_g=\Ric_g-(N-1)g\]
is the trace-free Ricci tensor.  Then Gursky and Han's inequality follows by dropping the nonnegative error term in \eqref{eq:GH-identity}.  

The relative comparison inequality  \eqref{eq:main-conjecture} is equivalent to
\begin{equation}\label{eq:Ineq equi}
Y(M,[\gamma]) \leq n(n-1) \left(C_NV\right)^{\frac{2}{n}} \qquad \mathrm{with}\quad C_N=\frac{\vol(\Sph^n)}{\vol(\Sph^{N}_+)}.
\end{equation}
By definition of the Yamabe constant, our primary objective is to establish
\begin{equation}\label{Ineq:key}
\left(\frac{C_NV}{A}\right)^{2/n}
 \ge
 \frac{\int_MR_\gamma\,d\sigma_\gamma}{n(n-1)A}.
 \end{equation}
This corresponds to Propositions \ref{lem:N3-compensated}-\ref{prop:power-bridge}.
The conformal Einstein equations lead to (see Lemma \ref{lem:beta-positive})
\[
  A\ge C_N V.
\]
The important ingredient in our proof is that the excess can be controlled
by the error term in \eqref{eq:GH-identity}:
\[
  Q=\int_X \rho |E_g|^2\,dv_g.
\]

The proof relies on one important quantity related to the defining function:
\[
  w=-(\Delta_g\rho+N\rho).
\]
The conformal Einstein equations guarantee
\begin{equation}\label{eq: Delta w}
-\Delta_g w=\frac{N}{(N-2)^2}\rho |E_g|^2 \qquad \mathrm{in~~} X,
\end{equation}
coupled with $w=0$ on $M$. Then the maximum principle applies to show that $w\ge 0, ~0 \leq \rho \leq 1$ in $X$.
On the hemisphere $(\Sph_+^N,d s^2+\cos ^2 s g_{\mathbb{S}^{N-1}})$ for $s\in [0, \pi/2]$, the geometry is encoded by the radial parameter function $s=\arcsin r$ and a one-dimensional calibrated function $\Phi_N(s)$ satisfying
\[
 -\Phi_N''+(N-1)\tan s\,\Phi_N'=C_N.
\]
Here, we distinguish its defining function by $r$.
So, $\Phi_N(\arcsin \rho)$ is a natural counterpart for  a general Poincar\'e--Einstein manifold. With this particular defining function $\rho$, the divergence theorem provides a key defect formula
\begin{equation}\label{defect-formula}
  A-C_NV=\int_X w\,\beta_N(\rho)\,dv_g,
\end{equation}
where $\beta_N$ is given in  Lemma \ref{key defect lemma}.

The remainder of the proof is devoted to estimating the defect term on the
right-hand side. For \(N=3,4\), we construct appropriate  supersolutions
\(f(\rho)\) satisfying
$$
  -\Delta_g f(\rho)\ge \beta_N(\rho),
  \qquad
  f(0)=0.
$$
Integration by parts together with \eqref{eq: Delta w} controls the
defect term
$$
  \int_X w\,\beta_N(\rho)\,dv_g
$$
by a constant multiple of
$$
  \int_X \rho |E_g|^2\,dv_g.
$$
For \(N\ge 5\), we instead use a nonlinear correction term to prove the
 estimate
$$
  \left(\frac{C_N V}{A}\right)^{2/n}
  \ge
  \frac{\int_M R_\gamma\,d\sigma_\gamma}{n(n-1)A}.
$$
Therefore, for all dimensions $N \geq 3$, we obtain \eqref{Ineq:key}, which concludes the proof of the inequality \eqref{eq:main-conjecture}.

For the rigidity part, when equality holds for \eqref{eq:main-conjecture}, $E_g=0$. By \eqref{eq: Delta w}, $w$ is  harmonic  in $X$ with $w|_M=0$. Hence $w\equiv0$.  Conformal Einstein equations  implies that $\rho$ satisfies the Obata equation, which yields the rigidity.

The paper is organized as follows.  In Section \ref{sec:prelim}, we establish basic properties of the defining function and the important quantity $w$ by conformal Einstein equations.  In Section \ref{sec:defect}, we derive the defect formula \eqref{defect-formula}.  Section \ref{sec:N3} is to construct appropriate supersolutions related to $\beta_N$ to obtain the defect estimates. Section \ref{sec:main-proof} is devoted to the proof of Theorem \ref{thm:main}.

\medskip
\noindent \textbf{Acknowledgment.}
The author is indebted to Zetian Yan for introducing him to this problem and for many insightful discussions. He would also like to thank his advisor, Qing Han, for the constant support and encouragement, and Alice Chang, Xuezhang Chen, and Matthew Gursky for their valuable comments and suggestions.

\section{Type-I Escobar-Yamabe compactification}\label{sec:prelim}
Let 
\[
  g=\rho^2g_+
\]
be the \textbf{type-I} Escobar-Yamabe compactification as before and $\gamma=g|_{TM}$. 

We first derive some identities related to the traceless Ricci curvature $E_g$ and the important quantity $w$.
\begin{lemma}\label{lem:CE-identities}
Let \begin{equation}\label{eq:E-def}
 E:=E_g=\Ric_g-(N-1)g.
\end{equation}
Define
\begin{equation}\label{eq:w-equations}
 w=-\left(\Delta_g\rho+N\rho\right).
\end{equation}
Then, in $X$,
\begin{align}
        |\nabla\rho|^2
        &=1-\rho^2-\frac{2\rho}{N}w,\label{eq:grad-rho}\\
        -\Delta_g w
        & =\frac{N}{(N-2)^2}\rho |E|^2.\label{eq:Delta-U}
\end{align}
Moreover, 
\[
w=0 \quad \text{ on~~ } M,
\]
and 
\begin{equation}\label{eq:w-positive}
        w\ge0 \quad \text{in~~} \barX.
\end{equation}
Consequently, $0\le \rho\le1$ in $\barX$.
\end{lemma}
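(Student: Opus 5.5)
We will work throughout with the conformal change formulas for $g=\rho^2 g_+$, with $g_+=\rho^{-2}g$ Einstein, $\Ric_{g_+}=-ng_+$. The standard identities for $\hat g=e^{2\phi}g$ with $\phi=-\log\rho$ give two equations.

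The first is the conformal Einstein equation. It is the tensor identity
\[
\Ric_g=-(N-2)\rho^{-1}\nabla^2\rho-\rho^{-1}(\Delta_g\rho)\,g+(N-1)\rho^{-2}\bigl(1-|\nabla\rho|^2\bigr)\,g+\Ric_{g_+}\text{-terms},
\]
which, after inserting $\Ric_{g_+}=-n\rho^{-2}g$ and collecting, reads
\[
E=-(N-2)\rho^{-1}\Bigl(\nabla^2\rho-\tfrac{\Delta\rho}{N}g\Bigr).
\]
The second is the scalar curvature relation
\[
R_g=-2(N-1)\rho^{-1}\Delta\rho+N(N-1)\rho^{-2}\bigl(1-|\nabla\rho|^2\bigr)-N(N-1)\rho^{-2}\cdot\text{(normalization)}.
\]
Concretely, $R_{g_+}=-N(N-1)$ gives
\[
-N(N-1)=\rho^2R_g+2(N-1)\rho\Delta\rho-N(N-1)|\nabla\rho|^2.
\]

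First I derive \eqref{eq:grad-rho}. Using $R_g=N(N-1)$ and dividing by $N(N-1)$ gives $|\nabla\rho|^2=1+\rho^2+\frac{2}{N}\rho\Delta\rho$. Substituting $\Delta\rho=-w-N\rho$ then yields $1-\rho^2-\frac{2\rho}{N}w$, which is \eqref{eq:grad-rho}.

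Next, the boundary value of $w$. On $M$ we have $\rho=0$. Moreover $H_g=0$ together with the trace-free part of the Hessian relation forces $\nabla^2\rho|_M=0$: on $M$ the Hessian restricted to $TM$ is $-H\,\nu$-type terms, which vanish, and then $\rho E$ vanishing at $M$ makes the traceless part vanish. Since the trace $\Delta\rho$ is then controlled, I expect $\Delta\rho|_M=0$, hence $w|_M=0$. To pin this down cleanly I will take the trace identity above multiplied out, $\rho^2R_g+2(N-1)\rho\Delta\rho=N(N-1)(|\nabla\rho|^2-1)$, differentiate it normally at $M$, and combine the result with $H_g=0$. This gives $\partial_\nu|\nabla\rho|^2=0$ and hence $\Delta\rho=0$ on $M$.

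Then I derive \eqref{eq:Delta-U}, which I expect to be the main computation. I take the divergence of the Hessian identity $\nabla^2\rho=\frac{\Delta\rho}{N}g-\frac{\rho}{N-2}E$. The commutation $\operatorname{div}\nabla^2\rho=\nabla\Delta\rho+\Ric(\nabla\rho)$ and the contracted Bianchi identity $\operatorname{div}E=\frac{N-2}{2N}\nabla R=0$ (since $R$ is constant) give
\[
\tfrac{N-1}{N}\nabla\Delta\rho+(N-1)\nabla\rho=-\tfrac{N}{N-2}E(\nabla\rho).
\]
Taking one more divergence and using $\langle E,\nabla^2\rho\rangle=-\frac{\rho}{N-2}|E|^2$ (the trace part drops out because $E$ is traceless) yields
\[
\tfrac{N-1}{N}\Delta(\Delta\rho+N\rho)=\tfrac{N}{(N-2)^2}\rho|E|^2.
\]
The constants need careful bookkeeping here; this is the step where the coefficient could be off, and I would check it against the normalization stated in the lemma, adjusting the derivation if needed.

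Finally, positivity. We have $-\Delta w\ge0$ in $X$, since $\rho>0$ there, and $w=0$ on $M$, so the minimum principle gives $w\ge0$. Then \eqref{eq:grad-rho} gives $\rho^2\le1-|\nabla\rho|^2\le1$, and together with $\rho>0$ in $X$ this yields $0\le\rho\le1$.
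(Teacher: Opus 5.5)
Your overall strategy works, but as written the central computation gives the wrong constant. When you take the divergence of $\nabla^2\rho=\frac{\Delta\rho}{N}g-\frac{\rho}{N-2}E$, the commutator term is $\Ric(\nabla\rho)=E(\nabla\rho)+(N-1)\nabla\rho$. Moving $E(\nabla\rho)$ to the right-hand side gives
\[
\tfrac{N-1}{N}\nabla\Delta\rho+(N-1)\nabla\rho=-\Bigl(1+\tfrac{1}{N-2}\Bigr)E(\nabla\rho)=-\tfrac{N-1}{N-2}E(\nabla\rho),
\]
not $-\frac{N}{N-2}E(\nabla\rho)$. With your coefficient, the second divergence yields $-\Delta w=\frac{N^2}{(N-1)(N-2)^2}\rho|E|^2$, which is not the lemma. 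With the correct coefficient, dividing by $N-1$ gives $dw=\frac{N}{N-2}E(\nabla\rho,\cdot)$, which is exactly the paper's intermediate identity. The second divergence, using $\operatorname{div}E=0$ and $\langle E,\nabla^2\rho\rangle=-\frac{\rho}{N-2}|E|^2$, then gives $-\Delta w=\frac{N}{(N-2)^2}\rho|E|^2$. So this is a bookkeeping slip, not a missing idea. Still, it must be fixed rather than deferred, because the later defect estimates depend on this precise constant.

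Once corrected, your argument differs from the paper's in two places.

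\textbf{The identity for $dw$.} The paper obtains it by differentiating the gradient identity \eqref{eq:grad-rho} and comparing with the Hessian identity contracted against $\nabla\rho$; this requires dividing by $\rho$ in the interior. Your route, via the Ricci commutation formula and the contracted Bianchi identity, avoids that division and does not use \eqref{eq:grad-rho} at all.

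\textbf{The boundary value $w|_M=0$.} The paper uses that $\rho E\to0$ at $M$, so the trace-free Hessian vanishes there, and then takes the tangential trace with $H_g=0$. This also yields $L=0$, i.e. $M$ is totally geodesic, which the paper needs later for the doubling step in the rigidity argument. Your clean argument is correct: the normal derivative of the scalar identity gives $\Delta\rho=N\nabla^2\rho(\nu,\nu)$ on $M$, while $H_g=0$ gives $\Delta\rho=\nabla^2\rho(\nu,\nu)$, so both vanish. It does not need boundedness of $E$ at $M$, but on its own it does not give total geodesy. Drop the heuristic that precedes it: the tangential Hessian of $\rho$ on $M$ is $-L$, not something proportional to $H$, so $H_g=0$ alone does not make it vanish.

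The remaining steps are fine. Your pointwise bound $\rho^2\le 1-|\nabla\rho|^2-\frac{2\rho}{N}w\le 1$ is slightly more direct than the paper's interior-maximum argument. Your displayed formula for $\Ric_g$ before taking the trace-free part is garbled, but the trace-free identity and the scalar identity you actually use are the correct ones.
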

\begin{proof}
For simplicity, covariant derivatives and norms are with respect to $g$.
Since $g_{+}$ is Einstein, the conformal change of Ricci curvature gives
\begin{equation}\label{eq:E-hessian}
E=-(N-2) \rho^{-1}\left(\nabla^2 \rho-\frac{\Delta \rho}{N} g\right).
\end{equation}

The conformal transformation of scalar curvature shows 
\begin{equation}\label{eq: scalar conformal}
 -N(N-1)
        =\rho^2N(N-1)+2(N-1)\rho\Delta\rho
         -N(N-1)|\nabla\rho|^2.
\end{equation}
Dividing by $N(N-1)$ in \eqref{eq: scalar conformal} yields
\[
|\nabla \rho|^2=1+\rho^2+\frac{2 \rho}{N} \Delta \rho=1-\rho^2-\frac{2 \rho}{N} w,
\]
which implies \eqref{eq:grad-rho}. 

We next derive the identity for $dw$. Differentiating  \eqref{eq:grad-rho} gives
\begin{equation}\label{eq: hessian rho 1}
 \nabla^2 \rho(\nabla \rho, \cdot)=- \rho d \rho-\frac{1}{N}(w d \rho+\rho d w).
\end{equation}
On the other hand, \eqref{eq:E-hessian} implies
\begin{equation}\label{eq: hessian rho 2}
\begin{aligned}
\nabla^2 \rho(\nabla \rho, \cdot)&=\frac{\Delta_g \rho}{N} d \rho-\frac{\rho}{N-2} E(\nabla \rho, \cdot)\\
&=-\left(\frac{w}{N}+\rho\right) d \rho-\frac{\rho}{N-2} E(\nabla \rho, \cdot).
\end{aligned}
\end{equation}
Comparing the two expressions \eqref{eq: hessian rho 1}, \eqref{eq: hessian rho 2} and dividing by $\rho$ in the interior yield
\begin{equation}\label{eq:grad-U}
        d w=\frac{N}{N-2}E(\nabla\rho,\cdot).
\end{equation}
Since $R_g$ is constant, the contracted Bianchi identity gives $\text{div}(E)=0$. Although (\ref{eq:grad-U}) is first derived in the interior, both sides are smooth up to
the boundary, and hence the identity extends continuously to \(\barX\).

Taking the divergence of \eqref{eq:grad-U} and using \eqref{eq:E-hessian} once more, we obtain
\[
\begin{aligned}
        \Delta w
        &= \frac{N}{N-2}\nabla^i(E_{ij}\rho^j)
         = \frac{N}{N-2}E_{ij}\rho^{ij}\\
        &= \frac{N}{N-2}E^{ij}\left(-\frac{\rho}{N-2}E_{ij}+\frac{\Delta\rho}{N}g_{ij}\right)
         =-\frac{N}{(N-2)^2}\rho |E|^2.
\end{aligned}
\]
 This proves \eqref{eq:Delta-U}.
 Since $E$ is smooth on $\barX$,  \eqref{eq:E-hessian} implies
\begin{equation}\label{eq: obata rho}
 \nabla^2\rho-\frac{\Delta\rho}{N}g=0
        \qquad\text{on }M.
\end{equation}
Let $\left\{e_\alpha\right\}_{\alpha=1}^n$ be a local orthonormal frame tangent to $M$. Since $\rho=0$ and $\nabla \rho=-\nu$ on $M$, where $\nu$ is the outward unit normal, we have
\[
\nabla^2 \rho\left(e_\alpha, e_\beta\right)=-L_{\alpha \beta},
\]
where $L$ is the second fundamental form of $M \subset (\barX, g)$. Taking the tangential trace in \eqref{eq: obata rho} and using $H_g=0$, we obtain
\[
0=\sum_{\alpha=1}^n\left(\nabla^2 \rho\left(e_\alpha, e_\alpha\right)-\frac{\Delta \rho}{N}\right)=-H_g-\frac{n}{N} \Delta \rho=-\frac{n}{N} \Delta \rho.
\]
This gives $\Delta \rho=0$ on $M$, whence $w=-(\Delta \rho+N \rho)=0$ on $M$. This in turn implies $L_{\alpha \beta}=0$, so the boundary $M$ is totally geodesic.

The maximum principle applied to \eqref{eq:Delta-U} with $\left.w\right|_M=0$, gives $w \geq 0$ on $\barX$. 
Finally, if $\rho$ admits an interior maximum point with value larger than $1$, then at that point, \eqref{eq:grad-rho} implies
\[
        0=|\nabla\rho|^2\le1-\rho^2<0.
\]
A contradiction! Since $\rho$ is a defining function, we obtain $0 \le \rho \le 1$.
\end{proof}

\section{The area-volume defect and extension of correction functions}\label{sec:defect}
We use the same notations in Section \ref{sec:prelim}. Let $g=\rho^2 g_{+}$ be the \textbf{type-I} Escobar-Yamabe compactification, $V=\operatorname{Vol}_g(X), A=\operatorname{Vol}_g(M)$, and
\[w=-\left(\Delta_g \rho+N \rho\right) .\]
By  Lemma \ref{lem:CE-identities} we know that $0 \leq \rho \leq 1, w \geq 0$ in $\overline X$, and
\[|\nabla \rho|^2=1-\rho^2-\frac{2 \rho}{N} w, \quad \Delta_g \rho=-N \rho-w.\]

Recall from \eqref{eq:Ineq equi} that
\begin{equation}\label{eq:C-N}
 C_N=\frac{\vol(\Sph^n)}{\vol(\Sph^{N}_+)} \quad \Rightarrow\quad 
 C_N^{-1}=\int_0^{\pi/2}\cos^{N-1}t\,dt.
\end{equation}

For $0\le s<\pi/2$, we define $\Phi_N$ through $\Phi_N(0)=0$ and
\begin{equation}\label{eq:Phi-def}
 \Phi_N'(s)=C_N\frac{\displaystyle\int_s^{\pi/2}\cos^{N-1}t\,dt}{\cos^{N-1}s}.
\end{equation}
Then $\Phi_N^{\prime}(0)=1$, and differentiating the above equation yields
\begin{equation}\label{eq:Phi-ODE}
 -\Phi_N''(s)+(N-1)\tan s\,\Phi_N'(s)=C_N.
\end{equation}
The following is the geometric meaning of $\Phi_N$: In the coordinates 
\[g_{\mathbb{S}_+^N}=d s^2+\cos ^2 s g_{\mathbb{S}^{N-1}}, \quad 0 \leq s \leq \frac{\pi}{2},\]
the hypersurface $s=0$ is the equator and $\nu=-\partial_s$ is the outward unit normal on $\partial \Sph_{+}^N$. We distinguish the defining function by $r=\sin s$ and $g_+=r^{-2} g_{\Sph_+^N}$ is the hyperbolic metric. Then $\Phi_N$ satisfies
\begin{equation}
-\Delta_{\Sph_{+}^N} \Phi_N=C_N \mathrm{~~in~~} \Sph_{+}^N, \quad-\partial_\nu \Phi_N=1 \text { on } \partial \Sph_{+}^N, \quad \Phi_N^{\prime}(\pi / 2)=0.
\end{equation}
More importantly, the vector field
\begin{equation}
X_N:=-\nabla \Phi_N
\end{equation}
satisfies 
\begin{equation}
\operatorname{div} X_N=C_N, \quad X_N \cdot \nu=1 \quad \text { on } \partial \Sph_{+}^N
\end{equation}
such that
\begin{equation}
\operatorname{Area}\left(\Sph^{N-1}\right)=\int_{\Sph_{+}^N} \operatorname{div} X_N=C_N \operatorname{Vol}\left(\Sph_{+}^N\right).
\end{equation}
So, $-\nabla\Phi_N$ is the spherical analogue of the Euclidean calibration vector field $X=x\in \mathbb{R}^N$ on the unit ball $\mathbb{B}^N \subset \mathbb{R}^N$, for which
\begin{equation}
\operatorname{div} X=N, \quad X \cdot \nu=1.
\end{equation}

Equation \eqref{eq:Phi-ODE} indicates that  $\Phi_N(s)$ with $s=\arcsin r$ is not a priori smooth at points where $r=1$ because  $(\arcsin r)'$ is singular there. 

Define
\begin{equation}\label{eq:K-N}
 K_N(r)=\int_r^1(1-t^2)^{\frac{N-2}{2}}\,dt, \qquad r \in [0,1].
\end{equation}
Actually, \(K_N\) comes from the change of variables \(r=\sin s\):
\[
  K_N(\sin s)
  =
  \int_s^{\pi/2}\cos^{N-1}t\,dt.
\]

For future citations, we need the following extension lemma for several functions related to $\Phi_N$.
\begin{lemma}\label{lem:endpoint-regularity}
For $N\ge3$,  let $a=1-r^2$. The function
\begin{equation}\label{eq:def of H-N}
H_N(r)=\frac{K_N(r)}{(1-r^2)^{N/2}},\qquad 0\le r<1,
\end{equation}
extends smoothly to $r=1$ by
\begin{equation}\label{eq:HN-endpoint-values}
 H_N(1)=\frac1N, \qquad H_N(0)=\frac1{C_N}
\end{equation}
and near $r=1$,
\begin{equation}\label{eq:HN-expansion}
 H_N(r)=\frac1N+\frac{a}{2(N+2)}+\frac{3a^2}{8(N+4)}+O(a^3).
\end{equation}
Consequently, the function
\begin{equation}\label{eq:Psi def}
\Psi_N(r)=\Phi_N(\arcsin r),\qquad 0\le r<1,
\end{equation}
extends smoothly to $[0,1]$ and satisfies
\begin{equation}\label{eq:Psi-derivative}
 \Psi_N'(r)=C_NH_N(r),\qquad \Psi_N'(0)=1,
 \qquad \Psi_N'(1)=\frac{C_N}{N},
\end{equation}
and
\begin{equation}\label{eq:Psi-derivative 2}
-\left(1-r^2\right) \Psi_N^{\prime \prime}(r)+N r \Psi_N^{\prime}(r)=C_N .
\end{equation}
Furthermore, the function
\begin{equation}\label{eq:beta-N}
\begin{aligned}
\beta_N(r)&=\Psi_N^{\prime}(r)+\frac{2 r}{N} \Psi_N^{\prime \prime}(r)\\
 &=C_N\frac{(1+r^2)K_N(r)-\frac{2r}{N}(1-r^2)^{N/2}}
 {(1-r^2)^{\frac{N+2}{2}}}, \quad 0\le r<1
\end{aligned}
\end{equation}
extends smoothly to $r=1$, with
\begin{equation}\label{eq:beta-endpoint}
 \beta_N(1)=\frac{C_N}{N+2}.
\end{equation}
Finally, define
\[
 f_N(r)=\frac{C_N}{N}\left(\frac1{C_N}-H_N(r)\right),\qquad
 P_N[f_N](r)=f_N'(r)+\frac{2r}{N}f_N''(r).
\]
Then, $f_N$ and $P_N[f_N]$ extend smoothly to $r=1$, and
\begin{equation}\label{eq:PN-endpoint}
 f_N(1)=\frac1N\left(1-\frac{C_N}{N}\right),\qquad
 P_N[f_N](1)=\frac{C_N(N-2)}{N^2(N+4)}.
\end{equation}
\end{lemma}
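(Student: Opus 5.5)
The whole lemma reduces to the behaviour of $K_N$ near $r=1$ in the variable $a=1-r^2$. The plan is to prove the expansion \eqref{eq:HN-expansion} and smoothness of $H_N$ first, and then obtain $\Psi_N$, $\beta_N$ and $f_N$ by differentiation and algebra.

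\emph{Step 1: smoothness of $H_N$.} Substitute $t=\sqrt{1-u}$ in \eqref{eq:K-N}, so $dt=-\tfrac{du}{2\sqrt{1-u}}$ and
\[
K_N(r)=\frac12\int_0^{a}u^{\frac{N-2}{2}}(1-u)^{-1/2}\,du .
\]
Now put $u=a\tau$:
\[
H_N(r)=\frac{K_N(r)}{a^{N/2}}=\frac12\int_0^1\tau^{\frac{N-2}{2}}(1-a\tau)^{-1/2}\,d\tau .
\]
This is a smooth (indeed real-analytic) function of $a$ near $a=0$, and $a=1-r^2$ is smooth in $r$, so $H_N$ extends smoothly to $r=1$. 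Expand $(1-a\tau)^{-1/2}=1+\tfrac12a\tau+\tfrac38a^2\tau^2+O(a^3)$ and integrate term by term using $\int_0^1\tau^{\frac{N-2}{2}+k}\,d\tau=\frac{2}{N+2k}$. This gives exactly $\frac1N+\frac{a}{2(N+2)}+\frac{3a^2}{8(N+4)}+O(a^3)$. The value $H_N(0)=K_N(0)=\int_0^{\pi/2}\cos^{N-1}t\,dt=1/C_N$ follows from \eqref{eq:C-N}.

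\emph{Step 2: $\Psi_N$.} By the chain rule and \eqref{eq:Phi-def}, using $\cos s=(1-r^2)^{1/2}$ and $K_N(\sin s)=\int_s^{\pi/2}\cos^{N-1}t\,dt$,
\[
\Psi_N'(r)=\frac{\Phi_N'(\arcsin r)}{\sqrt{1-r^2}}=C_N H_N(r).
\]
Hence $\Psi_N(r)=C_N\int_0^rH_N$ extends smoothly to $[0,1]$, and the endpoint values in \eqref{eq:Psi-derivative} come from Step 1. For \eqref{eq:Psi-derivative 2}, differentiate $(1-r^2)^{N/2}\Psi_N'=C_NK_N$ to get
\[
(1-r^2)^{N/2}\Psi_N''-Nr(1-r^2)^{\frac{N-2}{2}}\Psi_N'=-C_N(1-r^2)^{\frac{N-2}{2}},
\]
then divide by $(1-r^2)^{\frac{N-2}{2}}$ on $[0,1)$ and extend by continuity. (Alternatively, convert \eqref{eq:Phi-ODE} directly.)

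\emph{Step 3: $\beta_N$ and $f_N$.} $\beta_N$ is smooth on $[0,1]$ because $\Psi_N$ is. Its closed form follows by substituting $\Psi_N''=\frac{Nr\Psi_N'-C_N}{1-r^2}$ from \eqref{eq:Psi-derivative 2} into $\Psi_N'+\frac{2r}{N}\Psi_N''$:
\[
\beta_N=\frac{(1+r^2)\Psi_N'-\frac{2r}{N}C_N}{1-r^2}.
\]
Inserting $\Psi_N'=C_NK_N(1-r^2)^{-N/2}$ then gives \eqref{eq:beta-N}.

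For the endpoint values, write everything in the variable $a$, noting $r^2=1-a$, $\frac{d}{dr}=-2r\frac{d}{da}$ and $\frac{d^2}{dr^2}=4r^2\frac{d^2}{da^2}-2\frac{d}{da}$. Then
\[
H'(r)=-2r\,\partial_aH,\qquad H''=4(1-a)\,\partial_a^2H-2\,\partial_aH .
\]
At $r=1$ (so $a=0$), with $\partial_aH=\frac1{2(N+2)}$ and $\partial_a^2H=\frac{3}{4(N+4)}$:
\[
\beta_N(1)=C_N\Bigl(-\tfrac{1}{N+2}+\tfrac2N\bigl(\tfrac{3}{N+4}-\tfrac{1}{N+2}\bigr)\Bigr),
\]
which needs simplifying. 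This endpoint arithmetic is the only delicate point. A sign or factor error here would spoil \eqref{eq:beta-endpoint}, so I would cross-check it against the closed form \eqref{eq:beta-N} by expanding its numerator in $a$ and confirming that the $O(a^{(N+2)/2})$ cancellation gives $C_N/(N+2)$.

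Similarly, $f_N=\frac1N-\frac{C_N}{N}H_N$ is smooth with $f_N(1)=\frac1N\bigl(1-\frac{C_N}{N}\bigr)$. Since $P_N[f_N]=-\frac{C_N}{N}\bigl(H'+\frac{2r}{N}H''\bigr)$, evaluating with the same derivatives and simplifying should yield $\frac{C_N(N-2)}{N^2(N+4)}$.
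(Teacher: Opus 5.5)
Your overall route matches the paper's: the substitutions $u=1-t^2$, $u=a\tau$ giving $H_N(r)=\frac12\int_0^1\tau^{\frac{N-2}{2}}(1-a\tau)^{-1/2}\,d\tau$, the chain rule for $\Psi_N'$, and endpoint evaluation via $a$-derivatives. Steps 1 and 2, the closed form of $\beta_N$, the smoothness of $\beta_N$, $f_N$ and $P_N[f_N]$, and the value $P_N[f_N](1)$ are all correct. Your $H_N'(1)=-\frac1{N+2}$ and $H_N''(1)=\frac{3}{N+4}-\frac1{N+2}=\frac{2(N+1)}{(N+2)(N+4)}$ agree with the paper.

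\textbf{Error in the $\beta_N(1)$ step.} You differentiated once too often. Since $\Psi_N'=C_NH_N$ and $\Psi_N''=C_NH_N'$, the correct formula is $\beta_N(1)=C_N\bigl(H_N(1)+\frac2NH_N'(1)\bigr)$. What you wrote is instead $C_N\bigl(H_N'(1)+\frac2NH_N''(1)\bigr)$, which simplifies to
\[
C_N\,\frac{4-N^2}{N(N+2)(N+4)}=-\frac{C_N(N-2)}{N(N+4)}<0 .
\]
This is exactly $-N\,P_N[f_N](1)$, not $\frac{C_N}{N+2}$. It even has the wrong sign, which would contradict the positivity of $\beta_N$ needed in Lemma \ref{lem:beta-positive}. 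So the step you left as ``needs simplifying'' does not yield \eqref{eq:beta-endpoint} as written.

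\textbf{Fix.} The correct evaluation is one line:
\[
\beta_N(1)=C_N\Bigl(\frac1N-\frac{2}{N(N+2)}\Bigr)=\frac{C_N}{N+2}.
\]
Alternatively, the cross-check you proposed is precisely the paper's argument. Write $\beta_N=C_N\frac{(1+r^2)H_N-2r/N}{1-r^2}$ and use $r=1-\frac a2+O(a^2)$; the numerator is then $\frac{a}{N+2}+O(a^2)$. In the closed form \eqref{eq:beta-N}, the terms of order $a^{N/2}$ cancel, leaving a leading term $\frac{a^{(N+2)/2}}{N+2}$ over $a^{(N+2)/2}$.

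With this correction your proof is complete and essentially the paper's. Two of your steps are slightly cleaner but not a different method: deducing smoothness of $\beta_N$ directly from that of $\Psi_N$, and obtaining \eqref{eq:Psi-derivative 2} by differentiating $(1-r^2)^{N/2}\Psi_N'=C_NK_N$.
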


\begin{proof}
The only issue is the potential singularity at $r=1$.  With $a=1-r^2$ and the change of variables $u=1-t^2$, we obtain
\[
 K_N(r)=\frac12\int_0^a u^{\frac N2-1}(1-u)^{-1/2}\,du
       =a^{N/2}\,\mathcal H_N(a),
\]
where
\[
 \mathcal H_N(a)=\frac12\int_0^1 y^{\frac N2-1}(1-ay)^{-1/2}\,dy.
\]
Thus the function $\mathcal H_N(a)$ is smooth for $a$ near $0$, and $H_N(r)=\mathcal H_N(1-r^2)$ near $r=1$. Expanding $(1-ay)^{-1/2}$ at $a=0$ gives
\[
 \mathcal H_N(a)=\frac1N+\frac{a}{2(N+2)}+\frac{3a^2}{8(N+4)}+O(a^3),
\]
which is exactly \eqref{eq:HN-expansion}.

The identity \eqref{eq:Psi-derivative} follows from
\[
 \frac{d}{dr}\Phi_N(\arcsin r)
 =\frac{\Phi_N'(s)}{\sqrt{1-r^2}}
 =C_N\frac{K_N(r)}{(1-r^2)^{N/2}}
 =C_NH_N(r).
\]
For \eqref{eq:Psi-derivative 2}, by \eqref{eq:Psi-derivative} we have 
\begin{equation}
\Psi_N'(r)=C_NH_N(r).
\end{equation}
Since
\begin{equation}
K_N^{\prime}(r)=-\left(1-r^2\right)^{\frac{N-2}{2}},
\end{equation}
we have
$$
H_N^{\prime}(r) =-\left(1-r^2\right)^{-1}+N r K_N(r)\left(1-r^2\right)^{-\frac{N}{2}-1}
$$
and then
\begin{equation}\label{eq:Psi 3}
    \Psi_N^{\prime \prime}(r)=C_N H_N^{\prime}(r)=C_N\left[-\frac{1}{1-r^2}+\frac{N r K_N(r)}{\left(1-r^2\right)^{\frac{N}{2}+1}}\right] .
\end{equation}
Hence,
\begin{equation}
\begin{aligned}
-\left(1-r^2\right) \Psi_N^{\prime \prime}(r)+N r \Psi_N^{\prime}(r)&=  -C_N\left(1-r^2\right)\left[-\frac{1}{1-r^2}+\frac{N r K_N(r)}{\left(1-r^2\right)^{\frac{N}{2}+1}}\right] \\
&\qquad +N r C_N \frac{K_N(r)}{\left(1-r^2\right)^{N / 2}} \\
&=  C_N,
\end{aligned}
\end{equation}
which proves \eqref{eq:Psi-derivative 2}.
By \eqref{eq:Psi-derivative} and \eqref{eq:Psi 3} we obtain
\begin{equation}\label{eq:beta N}
\begin{aligned}
\beta_N(r) &=\Psi_N^{\prime}(r)+\frac{2 r}{N} \Psi_N^{\prime \prime}(r)\\
& =C_N \frac{K_N(r)}{\left(1-r^2\right)^{N / 2}}+\frac{2 r}{N} C_N\left[-\frac{1}{1-r^2}+\frac{N r K_N(r)}{\left(1-r^2\right)^{\frac{N}{2}+1}}\right] \\
& =C_N\left[\frac{K_N(r)}{\left(1-r^2\right)^{N / 2}}+\frac{2 r^2 K_N(r)}{\left(1-r^2\right)^{\frac{N}{2}+1}}-\frac{2 r}{N\left(1-r^2\right)}\right] \\
& =C_N\left[\frac{\left(1+r^2\right) K_N(r)}{\left(1-r^2\right)^{\frac{N+2}{2}}}-\frac{2 r\left(1-r^2\right)^{N / 2}}{N\left(1-r^2\right)^{\frac{N+2}{2}}}\right],
\end{aligned}
\end{equation}
which yields \eqref{eq:beta-N}.
Equivalently,
\begin{equation}\label{eq:beta-via-H}
 \beta_N(r)=C_N\frac{(1+r^2)H_N(r)-2r/N}{1-r^2}.
\end{equation}
Using \eqref{eq:HN-expansion} and $r=(1-a)^{1/2}=1-a/2+O(a^2)$, the numerator in \eqref{eq:beta-via-H} is 
\[\left(1+r^2\right) H_N(r)-\frac{2 r}{N}=\frac{a}{N+2}+O\left(a^2\right) .\]
Then $\beta_N$ extends smoothly to $r=1$ by $\beta_N(1)=C_N /(N+2)$, hence \eqref{eq:beta-endpoint} follows. 

Finally, the smooth extension of $f_N$ and $P_N[f_N]$ follows from that of $H_N$. The endpoint value of $f_N$ follows from $H_N(1)=1/N$.  Differentiating \eqref{eq:HN-expansion} gives
\[
 H_N'(1)=-\frac1{N+2},\qquad
 H_N''(1)=\frac{2(N+1)}{(N+2)(N+4)}.
\]
Since $f_N'=-(C_N/N)H_N'$ and $f_N''=-(C_N/N)H_N''$, we obtain
\[\begin{aligned} P_N[f_N](1) & =\frac{C_N}{N(N+2)}-\frac{4 C_N(N+1)}{N^2(N+2)(N+4)} \\ & =\frac{C_N(N-2)}{N^2(N+4)}.\end{aligned}\]
This proves \eqref{eq:PN-endpoint} and completes the proof.
\end{proof}
We now prove the following key defect formula.
\begin{lemma}\label{key defect lemma}
Let $\rho$ be the defining function for \textbf{type-I} Escobar-Yamabe compactification and the function
\[
 \Psi_N(\rho)=\Phi_N(\arcsin \rho),\quad 0\le \rho\le1
\] 
be given in \eqref{eq:Psi def}. Then,
\begin{equation}\label{eq:defect-D}
 A-C_NV=\int_Xw\,\beta_N(\rho)\,dv_g,
\end{equation}
where $\beta_N(\rho)=\Psi_N^{\prime}(\rho)+\frac{2 \rho}{N} \Psi_N^{\prime \prime}(\rho)$ is defined in \eqref{eq:beta-N}.
\end{lemma}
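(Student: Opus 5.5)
We compute $\Delta_g\Psi_N(\rho)$ by the chain rule, use the identities of Lemma \ref{lem:CE-identities} to replace $|\nabla\rho|^2$ and $\Delta_g\rho$, and then integrate over $X$ with the divergence theorem. By Lemma \ref{lem:endpoint-regularity}, $\Psi_N$ is smooth on $[0,1]$, and $0\le\rho\le1$. Hence $\Psi_N(\rho)$ is smooth on $\barX$, and no singularity arises where $\rho=1$.

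The chain rule gives
\[
\Delta_g\Psi_N(\rho)=\Psi_N''(\rho)|\nabla\rho|^2+\Psi_N'(\rho)\Delta_g\rho.
\]
Inserting $|\nabla\rho|^2=1-\rho^2-\frac{2\rho}{N}w$ and $\Delta_g\rho=-N\rho-w$, we get
\[
\Delta_g\Psi_N(\rho)=(1-\rho^2)\Psi_N''(\rho)-N\rho\Psi_N'(\rho)-w\Big(\Psi_N'(\rho)+\frac{2\rho}{N}\Psi_N''(\rho)\Big).
\]
By the ODE \eqref{eq:Psi-derivative 2}, the first two terms add up to $-C_N$. The coefficient of $-w$ is exactly $\beta_N(\rho)$ from \eqref{eq:beta-N}. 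Therefore
\[
-\Delta_g\Psi_N(\rho)=C_N+w\,\beta_N(\rho)\quad\text{in }X.
\]

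Next we integrate over $X$ and apply the divergence theorem. On $M$ we have $\rho=0$ and $\nabla\rho=-\nu$, where $\nu$ is the outward unit normal; this was established in the proof of Lemma \ref{lem:CE-identities}. Hence $\partial_\nu\Psi_N(\rho)=-\Psi_N'(0)=-1$ by \eqref{eq:Psi-derivative}, and
\[
-\int_X\Delta_g\Psi_N(\rho)\,dv_g=-\int_M\partial_\nu\Psi_N(\rho)\,d\sigma_g=A.
\]
Comparing with the integral of the right-hand side gives $A=C_NV+\int_Xw\,\beta_N(\rho)\,dv_g$, which is \eqref{eq:defect-D}.

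The one point needing care is justifying the chain-rule computation and the integration up to the set $\{\rho=1\}$. The composition $\arcsin\rho$ is singular there, but the computation uses only $\Psi_N$ itself, which is smooth on $[0,1]$ by Lemma \ref{lem:endpoint-regularity}. The ODE \eqref{eq:Psi-derivative 2} is proved on $[0,1)$ and extends to $r=1$ by continuity, so the pointwise identity holds on all of $\barX$. Everything involved is smooth up to the boundary, so the divergence theorem applies directly.
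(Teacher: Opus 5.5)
Your proposal is correct and follows the paper's proof: the chain rule, the substitution of $|\nabla\rho|^2$ and $\Delta_g\rho$ from Lemma \ref{lem:CE-identities}, the ODE \eqref{eq:Psi-derivative 2} giving $-\Delta_g\Psi_N(\rho)=C_N+w\beta_N(\rho)$, and the divergence theorem with $-\partial_\nu\Psi_N(\rho)=\Psi_N'(0)=1$ on $M$. The only difference is at $\{\rho=1\}$: the paper computes on $\{\rho<1\}$ and extends by density, using $w\ge0$ to rule out $\rho\equiv1$ on an open set, whereas you use the fact that $\Psi_N$ extends smoothly past $r=1$, so $\Psi_N(\rho)$ is smooth on $\barX$ directly; this is equally valid.
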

\begin{proof}
By Lemma \ref{lem:endpoint-regularity}, $\Psi_N(\rho)$ is smooth on $[0,1]$. On $M$, we have $\rho=0$, $\Psi_N^{\prime}(0)=1$, and $\nabla \rho=-\nu$, where $\nu$ is the outward unit normal. Hence,
\[
 -\partial_\nu \Psi_N(\rho)=1\qquad\text{on }M.
\]
This together the divergence theorem yields
\begin{equation}\label{eq:area-div}
 A=\int_M 1 d\sigma_\gamma=\int_X-\Delta_g \Psi_N(\rho) d v_g.
\end{equation}
On the open set $\{\rho<1\}$,
the composition $\Psi_N(\rho)$ is smooth on $X$. By the chain rule,
\begin{equation}
\Delta_g \Psi_N(\rho)=\Psi_N^{\prime \prime}(\rho)|\nabla \rho|^2+\Psi_N^{\prime}(\rho) \Delta_g \rho .
\end{equation}

By \eqref{eq:w-equations} and \eqref{eq:grad-rho} we have
\[|\nabla \rho|^2=1-\rho^2-\frac{2 \rho}{N} w, \qquad \Delta_g \rho=-N \rho-w .\]
Hence,
\begin{equation}\label{eq: derive of Psi}
\begin{aligned}
-\Delta_g \Psi_N(\rho)= & -\Psi_N^{\prime \prime}(\rho)\left(1-\rho^2-\frac{2 \rho}{N} w\right)-\Psi_N^{\prime}(\rho)(-N \rho-w) \\
= & {\left[-\left(1-\rho^2\right) \Psi_N^{\prime \prime}(\rho)+N \rho \Psi_N^{\prime}(\rho)\right] } \\
& +w\left[\frac{2 \rho}{N} \Psi_N^{\prime \prime}(\rho)+\Psi_N^{\prime}(\rho)\right].
\end{aligned} 
\end{equation}
Combining \eqref{eq: derive of Psi}, \eqref{eq:Psi-derivative 2} with  \eqref{eq:beta-N} implies
\begin{equation}\label{eq:defect-pointwise}
 -\Delta_g\Psi_N(\rho)-C_N=w\,\beta_N(\rho).
\end{equation}
All functions on both sides of \eqref{eq:defect-pointwise} are continuous on $\barX$ by Lemma \ref{lem:endpoint-regularity}.  Moreover, the set $\{\rho<1\}$ is dense. Otherwise,  $\rho \equiv 1$ on a nonempty open set, and hence $w=-(\Delta\rho+N\rho)=-N<0$ there, contradicting $w\ge0$ by Lemma \ref{lem:CE-identities}.  Hence, \eqref{eq:defect-pointwise}  extends to  $\barX$ by continuity. Integrating \eqref{eq:defect-pointwise} together with \eqref{eq:area-div} yields
\[A-C_N V=\int_X\left(-\Delta_g \Psi_N(\rho)-C_N\right) d v_g=\int_X w \beta_N(\rho) d v_g,\]
which is exactly \eqref{eq:defect-D}.
\end{proof}
The next lemma gives a rough comparison between the boundary area $A$ and the compactified volume $V$. It will be used repeatedly in the sequel.
\begin{lemma}\label{lem:beta-positive}
For every $N\ge3$, the function $\beta_N(r)$ satisfies
\[
 \beta_N(r)>0\qquad \text{ for}\quad  0\le r\le1.
\]
Consequently,
\begin{equation}\label{eq:reverse-bridge-general}
 A\ge C_NV.
\end{equation}
\end{lemma}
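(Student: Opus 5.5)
Using \eqref{eq:beta-via-H}, $\beta_N(r)=C_N\,\frac{(1+r^2)H_N(r)-2r/N}{1-r^2}$ for $0\le r<1$, and $\beta_N(1)=C_N/(N+2)>0$ by \eqref{eq:beta-endpoint}. So positivity on $[0,1)$ amounts to showing that the numerator
\[
G(r):=(1+r^2)K_N(r)-\tfrac{2r}{N}(1-r^2)^{N/2}
\]
is positive for $0\le r<1$; this is the form taken from \eqref{eq:beta-N} after multiplying by $(1-r^2)^{N/2}$.

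At the endpoints we have $G(0)=K_N(0)=C_N^{-1}>0$ and $G(1)=0$. The natural route is therefore to show that $G$ is strictly decreasing on $(0,1)$. Using $K_N'=-(1-r^2)^{(N-2)/2}$, a direct computation gives
\[
G'(r)=2rK_N(r)-(1+r^2)(1-r^2)^{\frac{N-2}{2}}-\tfrac{2}{N}(1-r^2)^{\frac N2}+2r^2(1-r^2)^{\frac{N-2}{2}}.
\]
The last three terms collect into $-(1-r^2)^{\frac{N-2}{2}}\bigl[(1-r^2)+\tfrac{2}{N}(1-r^2)\bigr]$, so
\[
G'(r)=2rK_N(r)-\tfrac{N+2}{N}(1-r^2)^{\frac N2}.
\]

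It remains to verify $G'<0$, which is equivalent to $2rH_N(r)<\frac{N+2}{N}$. I would prove this through monotonicity of $H_N$. From the representation in the proof of Lemma \ref{lem:endpoint-regularity},
\[
H_N=\mathcal H_N(a)=\tfrac12\int_0^1y^{\frac N2-1}(1-ay)^{-1/2}\,dy,
\]
which is valid for all $a=1-r^2\in(0,1]$, not only near $0$. This is increasing in $a$, hence decreasing in $r$. Consequently
\[
2rH_N(r)\le 2H_N(r)\le 2H_N(0)=2C_N^{-1}=2\int_0^{\pi/2}\cos^{N-1}t\,dt .
\]
For $N\ge3$ we have $\int_0^{\pi/2}\cos^{N-1}t\,dt\le \int_0^{\pi/2}\cos^2 t\,dt=\pi/4$, so $2C_N^{-1}\le\pi/2<1+2/N$ whenever $N\le 3$. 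This crude bound fails for larger $N$, so I would refine it.

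The refined bound uses $2rH_N(r)\le 2r\,\mathcal H_N(1-r^2)$ together with $(1-ay)^{-1/2}\le(1-a)^{-1/2}$. However, near $r=0$ we need the actual smallness of $r$ rather than a bound on $H_N$ alone. A cleaner route is to note that $r\le 1$ and $H_N(r)\le H_N(0)=C_N^{-1}\le\pi/4<1/2+1/N$ requires $N\le 6$. For general $N$ I would instead use the Wallis bound $C_N^{-1}\le\sqrt{\pi/(2(N-1))}$, which gives $2C_N^{-1}<1<1+2/N$ as soon as $2\pi/(N-1)<1$, i.e.\ $N\ge 8$. The cases $N=3,\dots,7$ are then checked using the explicit values $C_N^{-1}=1,\pi/4,2/3,3\pi/16,8/15$, all of which are $<(N+2)/(2N)$. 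The main obstacle is exactly this uniform bound; the combination of Wallis and the finite check handles it. This establishes $G'<0$ on $(0,1)$, hence $G>0$ on $[0,1)$, and so $\beta_N>0$ on $[0,1]$.

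Finally, since $w\ge0$ by Lemma \ref{lem:CE-identities} and $\beta_N(\rho)>0$, the defect formula of Lemma \ref{key defect lemma} gives $A-C_NV=\int_X w\,\beta_N(\rho)\,dv_g\ge0$.
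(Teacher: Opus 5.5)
Your argument is correct up to one slip (noted at the end), but it takes a longer route than the paper.

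\textbf{The paper's route.} The paper does not work with your $G$. It differentiates $F_N(r)=K_N(r)-\frac{2r(1-r^2)^{N/2}}{N(1+r^2)}=G(r)/(1+r^2)$ instead. Because $K_N$ now enters with constant coefficient, $F_N'$ involves only $K_N'=-(1-r^2)^{\frac{N-2}{2}}$. It comes out explicitly as $F_N'(r)=-\frac{(1-r^2)^{N/2}}{N(1+r^2)^2}\bigl[N(1+r^2)+2(1-r^2)\bigr]\le 0$. Together with $F_N(1)=0$, this gives $F_N>0$ on $[0,1)$ with no estimate on $K_N$ or $C_N$, uniformly in $N$.

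\textbf{Why your route is longer.} Multiplying by $1+r^2$ is what creates the term $2rK_N$ in your $G'$ and forces you to bound $rH_N(r)$. You then bound it by $H_N(0)=C_N^{-1}$, which throws away the factor $r$. That loss is what drives you to the Wallis bound plus a finite case check.

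\textbf{A one-line fix on your route.} Since $t\ge r$ on $[r,1]$,
\[
K_N(r)\le \frac1r\int_r^1 t(1-t^2)^{\frac{N-2}{2}}\,dt=\frac{(1-r^2)^{N/2}}{Nr},
\]
that is, $rH_N(r)\le 1/N$. The paper uses this same bound in the $N=3$ argument and in the $N\ge5$ lemma. It gives $G'(r)\le \frac{2}{N}(1-r^2)^{N/2}-\frac{N+2}{N}(1-r^2)^{N/2}=-(1-r^2)^{N/2}<0$ immediately, with no Wallis estimate and no case split.

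\textbf{The slip in your finite check.} The values you list, $1,\pi/4,2/3,3\pi/16,8/15$, are $C_N^{-1}=\int_0^{\pi/2}\cos^{N-1}t\,dt$ for $N=2,\dots,6$, not for $N=3,\dots,7$. The correct values are $\pi/4,\ 2/3,\ 3\pi/16,\ 8/15,\ 5\pi/32$. With your listed values the claimed inequality $C_N^{-1}<(N+2)/(2N)$ would fail at $N=3$ ($1>5/6$) and at $N=4$ ($\pi/4>3/4$). With the correct values all five cases pass, so the conclusion stands.

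\textbf{A wrong but unused aside.} The remark that $\pi/4<1/2+1/N$ ``requires $N\le 6$'' is wrong; it holds only for $N=3$. Your proof does not rely on it.

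\textbf{What is fine.} The following steps are all sound:
\begin{itemize}
\item the monotonicity of $H_N$ via $\mathcal H_N(a)$ on all of $(0,1]$;
\item the Wallis step $C_N^{-1}<\sqrt{\pi/(2(N-1))}$, giving $2C_N^{-1}<1$ for $N\ge 8$;
\item the use of $\beta_N(1)=C_N/(N+2)$;
\item the deduction of $A\ge C_NV$ from the defect formula and $w\ge0$.
\end{itemize}
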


\begin{proof}
For $0 \leq r<1$, by \eqref{eq:beta-N} the positivity of $\beta_N$ is equivalent to
\[
 K_N(r)\ge \frac{2r(1-r^2)^{N/2}}{N(1+r^2)}.
\]
Define
$$
F_N(r)=K_N(r)-\frac{2r(1-r^2)^{N/2}}{N(1+r^2)}.
$$
Then $F_N(1)=0$ and
\[
 F_N'(r)=
 -\frac{(1-r^2)^{\frac{N}{2}}}{N(1+r^2)^2}
 \left[N(r^2+1)+2(1-r^2)\right]\le0.
\]
Thus $F_N(r)>0$ for $0\le r<1$.  Together with  
$\beta_N(1)=C_N/(N+2)$ by Lemma \ref{lem:endpoint-regularity}, this proves the positivity of $\beta_N$. Since $w \geq 0$, the defect formula \eqref{eq:defect-D} yields
\[A-C_N V=\int_X w \beta_N(\rho) d v_g \geq 0,\]
and hence $A \geq C_N V.$
\end{proof}

\section{Supersolutions for \texorpdfstring{\(\beta_N\)}{beta_N}
and defect control}\label{sec:N3}
In this section, we construct one-dimensional supersolutions adapted to  $\beta_N$ and use them to control the area-volume defect. We recall the notation from Section \ref{sec:prelim}:
\[\begin{gathered}V=\operatorname{Vol}_g(X), \quad A=\operatorname{Vol}_g(M), \quad Q=\int_X \rho|E|^2 d v_g \\ w=-(\Delta_g \rho+N \rho), \quad-\Delta_g w=\frac{N}{(N-2)^2} \rho|E|^2.\end{gathered}\]
The defect identity \eqref{eq:defect-D} as in Lemma \ref{key defect lemma}  is
\[A-C_N V=\int_X w \beta_N(\rho) d v_g.\]
 For a smooth function $f=f(\rho)$ on $[0, 1]$, we have
\begin{equation}\label{eq: Delta f rho 1}
\begin{aligned}
-\Delta_g f(\rho) & =-f^{\prime \prime}(\rho)|\nabla \rho|^2-f^{\prime}(\rho) \Delta_g \rho \\ & =\left[-\left(1-\rho^2\right) f^{\prime \prime}(\rho)+N \rho f^{\prime}(\rho)\right]+w\left[f^{\prime}(\rho)+\frac{2 \rho}{N} f^{\prime \prime}(\rho)\right].
\end{aligned}
\end{equation}
Accordingly, write
\begin{equation}\label{eq: def of LN PN 1D}
L_N f=-\left(1-r^2\right) f^{\prime \prime}+N r f^{\prime}, \quad P_N[f]=f^{\prime}+\frac{2 r}{N} f^{\prime \prime},
\end{equation}
Then
\begin{equation}\label{eq: Detla f rho main}
-\Delta_g f(\rho)=L_N f(\rho)+w P_N[f](\rho).
\end{equation}

We define a test function $f_N$ by
\begin{equation}\label{eq: def of test function f-N}
    f_N(\rho)=\frac{C_N}{N}\left(\frac{1}{C_N}-H_N(\rho)\right),
\end{equation}
where $H_N$ is given in \eqref{eq:def of H-N}. As we will see later, $f_N$ satisfies
\begin{equation}\label{eq:property of f-N}
    L_Nf_N=\beta_N, \qquad f_N(0)=0,
\end{equation}
where $\beta_N$ is defined by \eqref{eq:beta-N}.
\subsection{The three-dimensional case}
Let $N=3$ and
\[
 C_3=\frac{\area(\Sph^2)}{\vol(\Sph^3_+)}=\frac4\pi.
\]
\begin{proposition}\label{lem:N3-compensated}
For $N=3$, there holds
\begin{equation}\label{eq:N3-compensated}
 \frac{1}{2} \int_M R_\gamma d \sigma_\gamma=A-Q\leq C_3 V.
\end{equation}
\end{proposition}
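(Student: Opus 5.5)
The plan is to reduce \eqref{eq:N3-compensated} to a bound on the defect term and then control that term with the test function $f_3$ from \eqref{eq: def of test function f-N}.

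\textbf{Reduction.} For $N=3$ we have $n=2$, so \eqref{eq:GH-identity} reads
\[
2A=\int_M R_\gamma\,d\sigma_\gamma+2Q .
\]
This is exactly the identity $\tfrac12\int_M R_\gamma\,d\sigma_\gamma=A-Q$. The inequality $A-Q\le C_3V$ is then equivalent, via the defect formula of Lemma~\ref{key defect lemma}, to
\[
\int_X w\,\beta_3(\rho)\,dv_g\le Q=\int_X\rho|E|^2\,dv_g .
\]

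\textbf{Integration by parts.} I will use $f=f_3$ and first verify \eqref{eq:property of f-N}, namely $L_3f_3=\beta_3$ and $f_3(0)=0$. The second is immediate from $H_3(0)=1/C_3$. The first is a direct computation from $\Psi_3'=C_3H_3$, \eqref{eq:Psi-derivative 2} and \eqref{eq:beta-via-H}. By \eqref{eq: Detla f rho main},
\[
-\Delta_g f_3(\rho)=\beta_3(\rho)+w\,P_3[f_3](\rho).
\]
Multiply this by $w$ and integrate. Green's formula produces the boundary terms $\int_M\big(f_3(\rho)\,\partial_\nu w-w\,\partial_\nu f_3(\rho)\big)\,d\sigma_\gamma$. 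Both vanish, because $w=0$ on $M$ (Lemma~\ref{lem:CE-identities}) and $f_3(\rho)=f_3(0)=0$ on $M$. Using \eqref{eq:Delta-U} with $N/(N-2)^2=3$, this gives
\[
\int_X w\,\beta_3(\rho)\,dv_g
=3\int_X f_3(\rho)\,\rho|E|^2\,dv_g-\int_X w^2\,P_3[f_3](\rho)\,dv_g .
\]
Since $w\ge0$, it therefore suffices to prove two one-variable inequalities on $[0,1]$:
\[
\text{(i)}\ \ 3f_3(r)\le 1,\qquad\text{(ii)}\ \ P_3[f_3](r)\ge0 .
\]

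\textbf{The two one-variable inequalities.} For (i), $f_3'=-(C_3/3)H_3'$. I will show $H_3$ is decreasing on $[0,1]$; this is plausible since $H_3(0)=\pi/4>H_3(1)=1/3$ and $H_3'(1)=-1/5$ by Lemma~\ref{lem:endpoint-regularity}. Then $f_3$ is increasing, and
\[
3f_3\le 3f_3(1)=1-\frac{C_3}{3}=1-\frac{4}{3\pi}<1 .
\]
Monotonicity of $H_3$ amounts to the sign of $3rK_3(r)-(1-r^2)^{3/2}$, which should follow from an ODE argument like the one in Lemma~\ref{lem:beta-positive}. For (ii), which I expect to be the main obstacle, I will use the explicit form
\[
K_3(r)=\frac{\pi}{4}-\frac12\Big(r\sqrt{1-r^2}+\arcsin r\Big).
\]
I then write $P_3[f_3]=-\tfrac{C_3}{3}\big(H_3'+\tfrac{2r}{3}H_3''\big)$ in terms of $K_3$ and powers of $1-r^2$.

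I would first try to reduce the sign of $P_3[f_3]$ to the sign of a function $G(r)$ that vanishes or is explicitly positive at $r=1$ and whose derivative has a definite sign. The positive endpoint value $P_3[f_3](1)=C_3/105$ from \eqref{eq:PN-endpoint} and a direct check at $r=0$ (where $P_3[f_3](0)=-\tfrac{C_3}{3}H_3'(0)=\tfrac{C_3}{3}>0$) support this. If a clean monotonicity argument fails, I would split $[0,1]$ into a neighborhood of $r=1$, controlled by the expansion \eqref{eq:HN-expansion}, and a compact remainder handled by elementary estimates on $\arcsin$.

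With (i) and (ii), the displayed identity gives $\int_X w\beta_3(\rho)\,dv_g\le Q$, hence $A-Q\le C_3V$.
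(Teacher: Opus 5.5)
Your proposal is the paper's proof step for step. You use the same reduction via \eqref{eq:GH-identity} and \eqref{eq:defect-D}, the same test function $f_3$, the same Green's identity with vanishing boundary terms, and the same two one-variable conditions (i) and (ii). Your sketch for (i) does go through: with $F(r)=(1-r^2)^{3/2}-3rK_3(r)$ you get $F(1)=0$ and $F'=-3K_3\le 0$, so $F\ge0$ and hence $H_3'\le 0$.

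The step you have not carried out is (ii), and that is where the substance of the proposition lies. As written it is a plan, not a proof. Your first idea does work, and no splitting of $[0,1]$ is needed. Using $H_3'=(3rH_3-1)/(1-r^2)$ twice gives
\[
P_3[f_3](r)=\frac{C_3}{9(1-r^2)^2}\Bigl[3+7r^2-15r(1+r^2)H_3(r)\Bigr].
\]
So on $(0,1)$, (ii) is equivalent to $G_3(r)\ge 0$, where
\[
G_3(r)=\frac{(3+7r^2)(1-r^2)^{3/2}}{15r(1+r^2)}-K_3(r).
\]
A direct computation gives $G_3(1)=0$ and
\[
G_3'(r)=\frac{(1-r^2)^{5/2}(r^2-3)}{15r^2(1+r^2)^2}<0\qquad(0<r<1).
\]
Hence $G_3>0$ on $(0,1)$, and the endpoints are covered by your direct values there. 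This is exactly how the paper closes the argument.

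There is also a small arithmetic slip. By \eqref{eq:PN-endpoint}, $P_3[f_3](1)=C_3(N-2)/(N^2(N+4))=C_3/63$, not $C_3/105$. The sign, which is all you used, is unaffected.
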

\begin{proof}
Gursky--Han's identity \eqref{eq:GH-identity} gives
\begin{equation}\label{eq:N3-GH}
 \int_MR_\gamma\,d\sigma_\gamma=2A-2Q.
\end{equation}
which implies \eqref{eq:N3-compensated}.
Recall from \eqref{eq:K-N} and \eqref{eq:def of H-N}, 
\[
K_3(r)=\int_r^1\sqrt{1-t^2}\,dt,
 \qquad
 H_3(r)=\frac{K_3(r)}{(1-r^2)^{3/2}}.
\]
From \eqref{eq: def of test function f-N}, we know that the $f_3(r)$ is defined by
\begin{equation}\label{eq:N3-f}
 f_3(r)=\frac{C_3}{3}\left(\frac1{C_3}-H_3(r)\right).
\end{equation}
Then $f_3(0)=0$, and using $H_3(1)=\frac{1}{3}$,
\begin{equation}\label{eq:N3-f1}
 f_3(1)=\frac13\left(1-\frac{C_3}{3}\right),
 \qquad
 3f_3(1)=1-\frac4{3\pi}<1.
\end{equation}
Moreover $0\le f_3\le f_3(1)$.  Indeed,
\begin{equation}\label{eq: H'}
 H_3'(r)=\frac{3rH_3(r)-1}{1-r^2}.
\end{equation}
For $r>0$,
\[K_3(r) \leq \frac{1}{r} \int_r^1 t\left(1-t^2\right)^{1 / 2} d t=\frac{\left(1-r^2\right)^{3 / 2}}{3 r},\]
so $H_3'(r) \leq 0.$ Thus $H_3(r) \leq H_3(0)=1 / C_3$. The lower bound $H_3(r) \geq 1 / 3$ follows from
\[K_3(r)=\frac{1}{2} \int_0^{1-r^2} u^{1 / 2}(1-u)^{-1 / 2} d u \geq \frac{1}{2} \int_0^{1-r^2} u^{1 / 2} d u=\frac{\left(1-r^2\right)^{3 / 2}}{3}.\]
These two bounds for $K_3(r)$  imply $0\le f_3\le f_3(1)$.
Using the identity \eqref{eq: H'} for $H_3'$ and differentiating once more, we have
\begin{equation}\label{eq:N3-Lf}
L_3 f_3=-\left(1-r^2\right) f_3^{\prime \prime}+3 r f_3^{\prime}=\beta_3.
\end{equation}
We next prove the positivity of the correction term
\begin{equation}\label{eq:N3-P-positive}
P_3[f_3](r)=f_3^{\prime}(r)+\frac{2 r}{3} f_3^{\prime \prime}(r).
\end{equation}
Indeed, by direct calculation,
\[
 P_3[f_3](r)=\frac{C_3}{9(1-r^2)^2}
 \left[3+7r^2-15r(1+r^2)H_3(r)\right].
\]
Thus, it is enough to show that
\begin{equation}\label{eq: H bound 1}
H_3(r)\le\frac{3+7r^2}{15r(1+r^2)} \qquad (0<r<1).
\end{equation}
Set
\[
 G_3(r)=\frac{(3+7r^2)(1-r^2)^{3/2}}{15r(1+r^2)}-K_3(r).
\]
Then, $G_3(1)=0$ and
\[
 G_3'(r)=
 \frac{\sqrt{1-r}\,(r-1)^2(1+r)^{5/2}(r^2-3)}
 {15r^2(1+r^2)^2}<0.
\]
Hence $G_3(r)\ge0$ on $(0,1)$, which proves \eqref{eq: H bound 1} and therefore implies the positivity of \eqref{eq:N3-P-positive}.

Combining \eqref{eq: Detla f rho main}, \eqref{eq:N3-Lf}, and positivity of \eqref{eq:N3-P-positive}, we obtain
\begin{align*}
 -\Delta_g f_3(\rho)=\beta_3(\rho)+wP_3[f_3](\rho)\ge\beta_3(\rho).
\end{align*}
Therefore, by the defect formula \eqref{eq:defect-D},
\[
 A-C_3V\le\int_Xw(-\Delta_gf_3(\rho))\,dv_g.
\]
Since $w=0$ and $f(\rho)=f(0)=0$ on $M$, the boundary terms vanish. Hence, integrating by parts together with \eqref{eq:Delta-U} implies
\[
 \int_Xw(-\Delta_gf_3(\rho))\,dv_g
 =\int_Xf_3(\rho)(-\Delta_gw)\,dv_g
 =3\int_Xf_3(\rho)\rho |E|^2\,dv_g.
\]
Using \eqref{eq:N3-f1}, we have
\[
 A-C_3V\le3f_3(1)Q<Q.
\]
This proves \eqref{eq:N3-compensated}. 
\end{proof}
\subsection{The four-dimensional case}
Let $N=4$ and
\[
 C_4=\frac{\area(\Sph^3)}{\vol(\Sph^4_+)}=\frac32.
\]
\begin{proposition}\label{lem:N4-compensated}
There hold
\begin{equation}\label{eq:N4-compensated}
 \frac{1}{6} \int_M R_\gamma d \sigma_\gamma=A-\frac{1}{6} Q \leq C_4 V
\end{equation}
and then
\begin{equation}\label{eq:N4-compensated-new}
\frac{1}{6} \int_M R_\gamma d \sigma_\gamma  \leq \left (\frac{C_4 V}{A}\right)^{\frac{2}{3}}.
\end{equation}
\end{proposition}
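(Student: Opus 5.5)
The plan for \eqref{eq:N4-compensated} follows the proof of Proposition~\ref{lem:N3-compensated} step by step, with $N=4$, $n=3$.

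\emph{The identity.} Gursky--Han's identity \eqref{eq:GH-identity} with $n=3$ reads $6A=\int_M R_\gamma\,d\sigma_\gamma+Q$. Dividing by $6$ gives $\frac16\int_M R_\gamma\,d\sigma_\gamma=A-\frac16 Q$.

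\emph{The inequality $A-\frac16Q\le C_4V$.} I would use the test function $f_4$ from \eqref{eq: def of test function f-N}, which satisfies $L_4f_4=\beta_4$ and $f_4(0)=0$ by \eqref{eq:property of f-N}. Two one-variable facts are needed.
\begin{enumerate}[label=(\roman*)]
\item $0\le f_4\le f_4(1)$ on $[0,1]$. This amounts to $\frac14\le H_4\le H_4(0)=1/C_4$. The proof copies the $N=3$ case: $H_4'=(4rH_4-1)/(1-r^2)$, the bound $K_4(r)\le \frac1r\int_r^1 t(1-t^2)\,dt$ gives monotonicity, and $(1-u)^{-1/2}\ge1$ gives the lower bound.
\item $P_4[f_4]\ge0$. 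Writing $P_4[f_4]$ in terms of $H_4$, this becomes an upper bound for $K_4(r)$ by an explicit algebraic function times $(1-r^2)^2$. I would prove it as for $G_3$: check that the difference vanishes at $r=1$ and has a sign-definite derivative. This is the main computational obstacle. Since $K_4(r)=\frac23-r+\frac{r^3}{3}$ is a polynomial, the check is elementary. As a consistency check, \eqref{eq:PN-endpoint} gives $P_4[f_4](1)=\frac{1}{32}>0$.
\end{enumerate}

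With (i) and (ii), \eqref{eq: Detla f rho main} gives $-\Delta_g f_4(\rho)\ge\beta_4(\rho)$. The defect formula \eqref{eq:defect-D}, followed by integration by parts (boundary terms vanish since $w=f_4(\rho)=0$ on $M$) and \eqref{eq:Delta-U} with $N/(N-2)^2=1$, gives
\[
A-C_4V\le\int_X f_4(\rho)\,\rho|E|^2\,dv_g\le f_4(1)\,Q .
\]
Finally, $f_4(1)=\frac14\bigl(1-\frac38\bigr)=\frac5{32}<\frac16$, so $A-\frac16Q\le A-f_4(1)Q\le C_4V$. This proves \eqref{eq:N4-compensated}.

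\emph{The second inequality.} Here I would combine \eqref{eq:N4-compensated} with Lemma~\ref{lem:beta-positive}, which gives $A\ge C_4V$ and hence $0<C_4V/A\le1$. Dividing \eqref{eq:N4-compensated} by $A$ yields
\[
\frac{1}{6A}\int_M R_\gamma\,d\sigma_\gamma\le\frac{C_4V}{A}\le\Bigl(\frac{C_4V}{A}\Bigr)^{2/3},
\]
since $t\le t^{2/3}$ for $t\in[0,1]$.

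I have to flag a gap here. This chain proves the area-normalized bound above, not \eqref{eq:N4-compensated-new} as literally written; the two agree only if $A=1$. The literal form is not scale-consistent, because its left side carries a factor of area while its right side is dimensionless. Under the paper's normalization $R_g=N(N-1)$, which fixes the scale, $A$ is not forced to equal $1$. So the argument does not establish \eqref{eq:N4-compensated-new} verbatim. I expect the intended statement is the normalized one, i.e.\ the $N=4$ instance of \eqref{Ineq:key}, which is exactly what the chain gives and what is needed downstream.
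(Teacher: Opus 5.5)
Your proposal is correct and follows the paper's proof. Both use the same test function $f_4$ with $-\Delta_g f_4(\rho)\ge\beta_4(\rho)$, integration by parts against $-\Delta_g w=\rho|E|^2$, the bound $f_4(1)=\frac{5}{32}<\frac16$, and then $C_4V\le A$ for the second inequality. The only difference is that the paper writes $f_4=\frac{r(2r+3)}{8(1+r)^2}$ and $P_4[f_4]=\frac{3}{8(1+r)^4}$ in closed form instead of running a $G_3$-type monotonicity argument. Incidentally, your side check is slightly off: $P_4[f_4](1)=\frac{3}{128}$, not $\frac1{32}$, though this does not affect the proof.

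Your flag on \eqref{eq:N4-compensated-new} is justified: the left side is missing a factor $1/A$. On the hemisphere $C_4V=A=2\pi^2$, so the literal version would assert $2\pi^2\le 1$. The paper's own one-line derivation, like yours, yields exactly the normalized bound $\frac{1}{6A}\int_M R_\gamma\,d\sigma_\gamma\le (C_4V/A)^{2/3}$, which is the form used in Section~\ref{sec:main-proof}.
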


\begin{proof}
Simplify $\beta_N$ in \eqref{eq:beta-N} and $H_N$ in \eqref{eq:def of H-N} by 
\begin{equation}\label{eq:N4-beta}
 \beta_4(r)=\frac{r^2+3r+4}{4(1+r)^3}, \qquad H_4(r)=\frac{K_4}{(1-r^2)^{2}}=\frac{r+2}{3(r+1)^2}.
\end{equation}
By \eqref{eq: def of test function f-N} we have
\begin{equation}\label{eq:N4-f}
f_4(r)=\frac{C_4}{4}\left(\frac{1}{C_4}-H_4(r)\right)=\frac{r(2r+3)}{8(1+r)^2}.
\end{equation}
Then,
\[
 f_4(0)=0,
 \qquad
 0\le f_4(r)\le f_4(1)=\frac5{32}<\frac16,
\]
 where the monotonicity follows from $f_4'(r)>0$ below. Direct calculation yields
\[
 f_4'(r)=\frac{r+3}{8(1+r)^3},
 \qquad
 f_4''(r)= -\frac{r+4}{4(1+r)^4},
\]
and hence
\begin{equation}\label{eq:N4-Lf}
 -(1-r^2)f_4''(r)+4rf_4'(r)=\beta_4(r),
\end{equation}
and
\begin{equation}\label{eq:N4-P}
 P_4[f_4](r)=f_4'(r)+\frac r2f_4''(r)=\frac3{8(1+r)^4}\ge0.
\end{equation}
Thus by \eqref{eq: Detla f rho main}, \eqref{eq:N4-Lf} and \eqref{eq:N4-P},
\[
 -\Delta_gf_4(\rho)=\beta_4(\rho)+wP_4[f_4](\rho)\ge\beta_4(\rho).
\]
Therefore, using \eqref{eq:defect-D} and integrating by parts as in the proof of Proposition \ref{lem:N3-compensated} yields
\[
 \begin{aligned}
A-C_4 V & \leq \int_X w\left(-\Delta_g f_4(\rho)\right) d v_g \\
& =\int_X f_4(\rho)\left(-\Delta_g w\right) d v_g \\
& =\int_X f_4(\rho) \rho|E|^2 d v_g \leq \frac{5}{32} Q<\frac{1}{6} Q.
\end{aligned}
\]
On the other hand,  Gursky-Han's  identity \eqref{eq:GH-identity} gives
\[
A-\frac{1}{6} Q=\frac{1}{6} \int_M R_\gamma d \sigma_\gamma,
\]
which proves \eqref{eq:N4-compensated}. 

Finally, \eqref{eq:N4-compensated-new} is a direct consequence of \eqref{eq:N4-compensated} and the fact that $C_4 V \leq A$ by Lemma \ref{lem:beta-positive}.
\end{proof}
\subsection{The higher-dimensional case}
Finally, we derive the following nonlinear defect estimate when $N\ge5$.
\begin{proposition}\label{prop:power-bridge}
For $N=n+1\ge5$, there holds
\begin{equation}\label{eq:power-bridge}
 \left(\frac{C_NV}{A}\right)^{2/n}
 \ge
 \frac{\displaystyle\int_MR_\gamma\,d\sigma_\gamma}{n(n-1)A}.
\end{equation}
\end{proposition}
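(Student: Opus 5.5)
The plan is to follow the scheme of Propositions \ref{lem:N3-compensated} and \ref{lem:N4-compensated}, but to close the argument through the concavity of $x\mapsto x^{2/n}$ instead of a linear comparison. Set $x=C_NV/A$; by Lemma \ref{lem:beta-positive}, $0<x\le1$. By Gursky--Han's identity \eqref{eq:GH-identity}, the right-hand side of \eqref{eq:power-bridge} equals
\[
 1-\frac{2Q}{n(n-1)^2A}.
\]
Hence \eqref{eq:power-bridge} follows once
\[
 x^{2/n}\ge 1-\frac{2Q}{n(n-1)^2A}.
\]
If $\int_MR_\gamma\,d\sigma_\gamma\le0$ there is nothing to prove. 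So I would assume $Q<\tfrac{n(n-1)^2}{2}A$.

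\emph{Step 1 (linear defect bound).} I would use the test function $f_N$ from \eqref{eq: def of test function f-N} and verify three facts for general $N$.
\begin{itemize}
\item $L_Nf_N=\beta_N$. This is a direct computation from $H_N'=(NrH_N-1)/(1-r^2)$.
\item $0\le f_N\le f_N(1)=\frac1N(1-\frac{C_N}{N})$. This follows from monotonicity of $H_N$, via $K_N(r)\le (1-r^2)^{N/2}/(Nr)$, together with the bound $H_N\ge 1/N$.
\item $P_N[f_N]\ge0$ on $[0,1]$.
\end{itemize}
The last fact reduces to an inequality $H_N(r)\le$ (explicit rational function of $r$). I would prove it exactly as for $N=3$: form the difference $G_N$ against $K_N$, note $G_N(1)=0$, and check the sign of $G_N'$. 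The endpoint value \eqref{eq:PN-endpoint} is positive, which is consistent with this. Then \eqref{eq: Detla f rho main}, the defect formula \eqref{eq:defect-D} and an integration by parts against \eqref{eq:Delta-U} give
\[
 A-C_NV\le \frac{N}{(N-2)^2}\,f_N(1)\,Q=\frac{\theta_N}{(n-1)^2}\,Q,
 \qquad \theta_N:=1-\frac{C_N}{N}\in(0,1).
\]

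\emph{Step 2 (nonlinear closing).} Step 1 gives $\frac{2Q}{n(n-1)^2A}\ge\frac{2}{n\theta_N}(1-x)$. It therefore suffices to show
\[
 x^{2/n}\ge1-\frac{2}{n\theta_N}(1-x).
\]
The line on the right passes through $(1,1)$ with slope $\frac{2}{n\theta_N}>\frac2n=\frac{d}{dx}x^{2/n}\big|_{x=1}$. By concavity, the inequality holds on an interval $[x_*,1]$, where $x_*$ is the other crossing point. This is the main obstacle. For large $N$ we have $C_N\sim\sqrt N$, so $n\theta_N>2$, and the line is positive at $x=0$; the inequality then fails for small $x$.

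To exclude small $x$, I would first try to feed the standing assumption $Q<\tfrac{n(n-1)^2}{2}A$ back into Step 1. This gives $1-x\le n\theta_N/2$, which is too weak by itself. So I would sharpen the argument by keeping the discarded term $\int_X w^2P_N[f_N](\rho)\,dv_g$. That term is nonnegative and, by the defect formula with $\beta_N\le\beta_N(0)$, is bounded below in terms of $(A-C_NV)^2$. This produces a quadratic inequality
\[
 A-C_NV+c_N\frac{(A-C_NV)^2}{\,\cdots\,}\le\frac{\theta_N}{(n-1)^2}Q .
\]
That quadratic correction is what should convert the linear lower bound on $1-x$ into the power $x^{2/n}$. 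Concretely, I would aim to show $\frac{2}{n}\log\frac1x\le\frac{2Q}{n(n-1)^2A}$, using $x^{2/n}\ge1+\frac2n\log x$.

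If this does not close, my fallback is to replace $f_N$ by $f_N+\lambda\,\phi(\rho)$ for a second radial function $\phi$ with $L_N\phi\ge0$, $\phi(0)=0$ and $P_N[\phi]\ge0$. The constant $\lambda$ would then be tuned so that the effective constant replacing $\theta_N$ is at most $1$ on the relevant range of $x$.
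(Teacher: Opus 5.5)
\paragraph{Gap: the quadratic term is bounded too crudely.}
Several parts of your proposal agree with the paper: the reduction to $x^{2/n}\ge 1-\frac{2Q}{n(n-1)^2A}$, Step~1, and keeping $\int_X P_N[f_N](\rho)\,w^2\,dv_g$. You also correctly see that no linear bound closes for $N\ge5$.

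The gap is the step that bounds the retained term below by $D^2$, where $D=A-C_NV$. Using $\beta_N\le\beta_N(0)=1$ decouples $\beta_N$ from $P_N$. The best this yields is $\int_X P_N w^2\ge(\min_{[0,1]}P_N)\,(\int_X w)^2/V\ge(\min P_N)\,D^2/V$, and $\min P_N\le P_N(1)=\frac{C_N(N-2)}{N^2(N+4)}$. With $D=A(1-x)$ and $V=Ax/C_N$ this gives
\[
\frac{Q}{(n-1)^2A}\ \ge\ \frac{1}{\theta_N}\Big[(1-x)+k_N\,\frac{(1-x)^2}{x}\Big],\qquad k_N=C_N\min_{[0,1]}P_N\le\frac{C_N^2(N-2)}{N^2(N+4)}=O(N^{-1}).
\]
You need the right-hand side to dominate $\frac n2(1-x^{2/n})=(1-x)+(\tfrac12-\tfrac1n)(1-x)^2+\cdots$. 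But $\theta_N^{-1}-1\approx C_N/N=O(N^{-1/2})$ and $k_N=O(N^{-1})$, so this fails for large $N$ at $x$ of order one. For example, with $N=101$ and $x=\tfrac12$, the right side of the display is about $0.55$, while $\frac n2(1-x^{2/n})\approx 0.69$.

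The loss is a factor of order $N$. Your decoupling pairs $\max\beta_N$ (at $r=0$) with $\min P_N$ (near $r=1$), even though $\beta_N$ is also small exactly where $P_N$ is small.

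\paragraph{What is missing.}
The missing ingredient is the pointwise coupling $\beta_N(r)^2\le 2C_N P_N[f_N](r)$ on $[0,1]$ (Lemma~\ref{lem:high-1D}). The paper proves it in three steps:
\begin{enumerate}
\item it rewrites the inequality as $H_N\le H_N^+$ for an explicit $H_N^+$;
\item it shows $H_N^+$ is a supersolution of $H'=(NrH-1)/(1-r^2)$ with $H_N^+(1)=H_N(1)=\frac1N$, the sign reducing to positivity of the quartic $\mathcal S_N$;
\item it integrates with the factor $(1-r^2)^{N/2}$.
\end{enumerate}
Then the weighted Cauchy--Schwarz inequality $D^2\le\int_X P_N w^2\int_X\beta_N^2/P_N\le 2C_NV\int_X P_N w^2$ gives exactly $D+\frac{D^2}{2C_NV}=\frac A2(\frac1x-x)$, i.e.\ $k_N=\frac12$. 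The chain $\frac12(\frac1x-x)=\sinh\log\frac1x\ge\log\frac1x\ge\frac n2(1-x^{2/n})$ then closes the argument using only $\theta_N\le1$. A constant of this size is genuinely needed: as $N\to\infty$ this closing forces $\liminf k_N\ge\frac12$.

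\paragraph{Your fallback cannot work.}
Suppose $L_N\phi\ge0$, $\phi(0)=0$ and $\phi$ is smooth on $[0,1]$. Then $\big((1-r^2)^{N/2}\phi'\big)'=-(1-r^2)^{N/2-1}L_N\phi\le0$, and $(1-r^2)^{N/2}\phi'\to0$ as $r\to1$, so $\phi'\ge0$ and $\phi\ge0$. Hence $f_N$ is the minimal supersolution, and $\theta_N$ is the best linear constant this scheme can produce. By concavity, a linear bound closes only if $n\theta\le2$, not $\theta\le1$, and $n\theta_N\le2$ fails for every $N\ge5$.
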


The proof is based on a one-dimensional estimate for the correction function introduced in Lemma \ref{lem:endpoint-regularity}. Recall from \eqref{eq:def of H-N} that
\begin{equation}\label{eq:H-N}
 H_N(r)=\frac{K_N(r)}{(1-r^2)^{N/2}},
\end{equation}
so that
\begin{equation}\label{eq:H-ODE}
 H_N'(r)=\frac{NrH_N(r)-1}{1-r^2},
 \qquad
 H_N(0)=\frac1{C_N},
 \qquad
 \lim_{r\to1}H_N(r)=\frac1N.
\end{equation}
By \eqref{eq: def of test function f-N} we have
\begin{equation}\label{eq:high-f}
 f_N(r)=\frac{C_N}{N}\left(\frac1{C_N}-H_N(r)\right).
\end{equation}
\begin{lemma}\label{lem:high-1D}
For $N\ge5$, let $H_N$ and $f_N$ be the functions as in \eqref{eq:H-N} and \eqref{eq:high-f}. Set
\begin{equation}\label{eq:calP-def}
 P_N(r)=P_N\left[f_N\right](r)=f_N'(r)+\frac{2r}{N}f_N''(r).
\end{equation}
Then,
\begin{equation}\label{eq:high-f-bounds}
 0\le f_N(r)\le f_N(1)=\frac1N\left(1-\frac{C_N}{N}\right).
\end{equation}
Moreover, 
\begin{equation}\label{eq:high-Lf}
 L_Nf_N=\beta_N,
\end{equation}
where 
\[
 L_Nf=-(1-r^2)f''+Nrf',
\]
as in \eqref{eq: def of LN PN 1D}.
Furthermore, 
\begin{equation}\label{eq:high-pointwise}
\beta_N(r)^2\le2C_NP_N(r)\qquad \text{ for } 0\le r\le1.
\end{equation}
In particular, $P_N(r)>0$ on $[0,1]$.
\end{lemma}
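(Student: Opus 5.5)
The plan is to work entirely with $H=H_N$. Since $f_N=\frac{C_N}{N}\bigl(\frac1{C_N}-H_N\bigr)$, we have $f_N'=-\frac{C_N}{N}H'$ and $f_N''=-\frac{C_N}{N}H''$. The Riccati-type identity \eqref{eq:H-ODE}, $(1-r^2)H'=NrH-1$, will be the basic tool. The three claims are handled in turn; only the last requires real work.

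\emph{Step 1: the bounds \eqref{eq:high-f-bounds}.} These follow as in the case $N=3$.
\begin{itemize}
\item For $r>0$ we have $t/r\ge1$ on $[r,1]$, so
\[
K_N(r)\le \frac1r\int_r^1 t(1-t^2)^{\frac{N-2}{2}}dt=\frac{(1-r^2)^{N/2}}{Nr}.
\]
Hence $NrH\le1$, so $H'\le0$ and $H\le H(0)=1/C_N$. This gives $f_N\ge0$.
\item In the representation $K_N=\frac12\int_0^{a}u^{\frac N2-1}(1-u)^{-1/2}du$ with $a=1-r^2$, use $(1-u)^{-1/2}\ge1$. This gives $H\ge 1/N=H(1)$, hence $f_N\le f_N(1)$. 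The value of $f_N(1)$ is already recorded in \eqref{eq:PN-endpoint}.
\end{itemize}

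\emph{Step 2: the identity \eqref{eq:high-Lf}.} Differentiate $(1-r^2)H'=NrH-1$ to get
\[
(1-r^2)H''=NH+(N+2)rH'.
\]
Substituting this into $L_Nf_N=\frac{C_N}{N}\bigl[(1-r^2)H''-NrH'\bigr]$ gives
\[
L_Nf_N=C_N\Bigl(H+\tfrac{2r}{N}H'\Bigr)=\Psi_N'+\tfrac{2r}{N}\Psi_N''=\beta_N
\]
by \eqref{eq:Psi-derivative} and \eqref{eq:beta-N}. This is valid on $[0,1)$ and extends to $r=1$ by continuity (Lemma \ref{lem:endpoint-regularity}).

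\emph{Step 3: the inequality \eqref{eq:high-pointwise}.} This is the main obstacle.

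First I will express both sides as explicit functions of $(r,h)$ with $h=H(r)$. By \eqref{eq:beta-via-H},
\[
\beta_N=C_N\frac{(1+r^2)h-2r/N}{1-r^2}.
\]
Using the formulas for $H'$ and $H''$ above, $P_N=-\frac{C_N}{N}\bigl(H'+\frac{2r}{N}H''\bigr)$ becomes a rational function that is affine in $h$, with denominator $(1-r^2)^2$. After multiplying by $(1-r^2)^2/C_N$, the claim becomes
\[
Q_r(h):=C_N\bigl[(1+r^2)h-\tfrac{2r}{N}\bigr]^2-2(1-r^2)^2\widetilde P(r,h)\le 0,
\]
where $\widetilde P$ is affine in $h$. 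Since $Q_r$ is a convex quadratic in $h$, it suffices to show that $H(r)$ lies between the two roots of $Q_r$. Equivalently, I will exhibit explicit functions $h_-(r)\le H(r)\le h_+(r)$ with $Q_r(h_\pm(r))\le0$.

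Sanity checks at the endpoints confirm that there is room.
\begin{itemize}
\item At $r=0$: $\beta_N(0)=1$ and $P_N(0)=C_N/N$, so we need $N\le 2C_N^2$. This holds because Wallis-type bounds give $C_N^2\ge\frac{2(N-1)}{\pi}$, roughly.
\item At $r=1$: \eqref{eq:beta-endpoint} and \eqref{eq:PN-endpoint} reduce the claim to $N^2(N+4)\le2(N-2)(N+2)^2$. This is true for $N\ge5$ (and even for $N=4$).
\end{itemize}

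For the barriers, I would first try the following.
\begin{itemize}
\item Truncations of the expansion $(1-ay)^{-1/2}=\sum\binom{2k}{k}(ay/4)^k$ inside $\mathcal H_N(a)$. These give polynomial lower bounds in $a$ near $r=1$.
\item Upper bounds from the ODE comparison principle for $H'=(NrH-1)/(1-r^2)$: any $h_+$ with $h_+'\le (Nrh_+-1)/(1-r^2)$ and $h_+(1)=1/N$ dominates $H$.
\item Near $r=0$, the explicit values $H(0)=1/C_N$ and $H'(0)=-1$, combined with two-sided Wallis bounds on $C_N$.
\end{itemize}
The case analysis (near $r=0$ versus near $r=1$) and the dependence on $N$ through $C_N$ are where the difficulty lies. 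The margin at both endpoints grows with $N$, so I expect one uniform argument to work for large $N$, with $N=5,6$ possibly checked separately.

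Positivity of $P_N$ then follows immediately: $\beta_N>0$ by Lemma \ref{lem:beta-positive}, so $2C_NP_N\ge\beta_N^2>0$.
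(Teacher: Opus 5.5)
Your Steps 1 and 2 are correct and essentially the paper's argument. Your derivation of $L_Nf_N=\beta_N$ through $(1-r^2)H''=NH+(N+2)rH'$ is in fact a bit cleaner. The problem is Step 3, which is the actual content of the lemma. You reduce \eqref{eq:high-pointwise} to trapping $H_N$ between the roots of a quadratic, and then only list strategies you ``would try''. No barrier is produced and nothing is verified, so \eqref{eq:high-pointwise} is not proved, and neither is the positivity of $P_N$ that depends on it.

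\emph{The missing idea.} After clearing the factor $(1-r^2)^2$, the constant $C_N$ cancels completely, and the inequality is equivalent to
\[
\bigl((1+r^2)H_N(r)+r\bigr)^2\le r^2+\frac{2(1+r^2)}{N}+\frac{4r^2}{N^2}.
\]
Since $(1+r^2)H_N+r>0$ and the smaller root is negative, no lower barrier is needed. The truncated-series lower bounds and the Wallis estimates for $C_N$ are therefore superfluous. The claim is exactly $H_N\le H_N^+$, where $H_N^+$ is the larger root \eqref{eq:Hplus}.

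\emph{No room at $r=1$.} Note that $H_N^+(1)=H_N(1)=1/N$ and $(H_N^+)'(1)=H_N'(1)=-1/(N+2)$. So in this reduced form there is no room at $r=1$, contrary to your margin heuristic. Any upper barrier squeezed between $H_N$ and $H_N^+$ must be tangent to $H_N$ there, which makes an ad hoc construction delicate.

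\emph{The paper's key step.} It uses $H_N^+$ itself as the barrier in the comparison principle you describe. A direct computation gives \eqref{eq:Hplus-residual}, i.e.
\[
(H_N^+)'-\frac{NrH_N^+-1}{1-r^2}=-\frac{r\,Z_N(r)}{N(1-r^2)(1+r^2)^2\sqrt{B_N(r)}}.
\]
The sign $Z_N\ge0$ follows from two facts: the polynomial identity $\mathcal A_N^2-r^2B_N\bigl(N^2(1+r^2)+4N\bigr)^2=(1-r^2)^2\mathcal S_N$, and $\mathcal S_N>0$ on $[0,1]$ for $N\ge5$. The integrating factor $(1-r^2)^{N/2}$, together with boundedness of $H_N^+-H_N$ at $r=1$, then gives $H_N\le H_N^+$. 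This argument uses no information about $C_N$, and needs no case split between $r$ near $0$ and $r$ near $1$. Without it, or some other explicit supersolution lying below $H_N^+$ and checked in full, Step 3 remains a plan rather than a proof.
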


\begin{proof}
We first prove \eqref{eq:high-f-bounds}. The lower bound estimate for $H_N$ follows from 
\[K_N(r)=\frac{1}{2} \int_0^{1-r^2} u^{N / 2-1}(1-u)^{-1 / 2} d u \geq \frac{1}{2} \int_0^{1-r^2} u^{N / 2-1} d u=\frac{\left(1-r^2\right)^{N / 2}}{N}.\]
Then, $H_N \ge 1/N$. For $r>0$,
\[
K_N(r) \leq \frac{1}{r} \int_r^1 t\left(1-t^2\right)^{(N-2) / 2} d t=\frac{\left(1-r^2\right)^{N / 2}}{N r}.
\]
By \eqref{eq:H-ODE},
\[
 H_N'(r)=\frac{NrH_N(r)-1}{1-r^2}\le0
\]
on $(0,1)$, and then $H_N(r)\le H_N(0)=1/C_N$.  Hence, $ 0\le f_N(r)\le f_N(1),$ which implies \eqref{eq:high-f-bounds}.

Differentiating \eqref{eq:high-f} and using \eqref{eq:H-ODE} give
\begin{equation}\label{eq:fprime-high-proof}
 f_N'(r)=\frac{C_N}{N}\frac{1-NrH_N(r)}{1-r^2}.
\end{equation}
Substituting \eqref{eq:H-ODE} into the derivative of \eqref{eq:fprime-high-proof} yields
\[
 L_Nf_N=-(1-r^2)f_N''+Nrf_N'
 =C_N\frac{(1+r^2)H_N(r)-2r/N}{1-r^2}.
\]
By \eqref{eq:beta-via-H}, the last expression is exactly $\beta_N$. This proves \eqref{eq:high-Lf}.

It remains to prove the pointwise inequality \eqref{eq:high-pointwise}.  A direct calculation from \eqref{eq:H-ODE} and \eqref{eq:fprime-high-proof} gives 
\begin{equation}\label{eq:calP-explicit}
 \frac{P_N(r)}{C_N}
 =\frac{N+(N+4)r^2-N(N+2)r(1+r^2)H_N(r)}{N^2(1-r^2)^2}.
\end{equation}
Using 
\[
\frac{\beta_N(r)}{C_N}=\frac{\left(1+r^2\right) H_N(r)-2 r / N}{1-r^2},
\]
which is \eqref{eq:beta-via-H}, together with \eqref{eq:calP-explicit}, we know that
$\beta_N^2\le2C_NP_N$ is equivalent to
\begin{equation}\label{eq:H-upper-needed}
 \left((1+r^2)H_N(r)+r\right)^2
 \le
 r^2+\frac{2(1+r^2)}{N}+\frac{4r^2}{N^2}.
\end{equation}
Define
\begin{equation}\label{eq:Hplus}
 H_N^+(r)=
 \frac{-r+\left(r^2+\frac{2(1+r^2)}{N}+\frac{4r^2}{N^2}\right)^{1/2}}
 {1+r^2}.
\end{equation}
Then, \eqref{eq:H-upper-needed} is equivalent to
\begin{equation}\label{eq:HleHplus}
 H_N(r)\le H_N^+(r).
\end{equation}

We now prove this comparison inequality.

Set
\[
 B_N(r)=2N+(N^2+2N+4)r^2.
\]
Then
\[
 H_N^+(r)=\frac{-r+N^{-1}\sqrt{B_N(r)}}{1+r^2}.
\]
Direct differentiation gives
\begin{equation}\label{eq:Hplus-residual}
 (H_N^+)'-\frac{NrH_N^+-1}{1-r^2}
 =-\frac{r\,Z_N(r)}{N(1-r^2)(1+r^2)^2\sqrt{B_N(r)}},
\end{equation}
where
\begin{align}
 \mathcal A_N(r)
 &=(N^3+N^2+2N-4)r^4\label{eq:AcalN}\\ 
 &\qquad+(N^3+6N^2+4N+8)r^2
 +(N^2+2N-4),\\
 Z_N(r)
 &=\mathcal A_N(r)
 -r\sqrt{B_N(r)}\,\bigl(N^2(1+r^2)+4N\bigr).\label{eq:XiN}
\end{align}
We claim that $Z_N(r) \geq 0$ for $0 \leq r \leq 1$.
The polynomial $\mathcal A_N$ is strictly positive on $[0,1]$ for $N\ge5$.  Moreover $N^2(1+r^2)+4N>0$. Squaring the two nonnegative terms in \eqref{eq:XiN}, we obtain 
\begin{equation}\label{eq:Sidentity}
 A_N(r)^2
 -r^2B_N(r)\bigl(N^2(1+r^2)+4N\bigr)^2
=(1-r^2)^2\mathcal S_N(r),
\end{equation}
where
\begin{align}
 \mathcal S_N(r)
 &=(N^4-4N^3-4N^2-16N+16)r^4 \notag\\
 &\quad +(2N^4-24N^2-32N-32)r^2
 \\
 &\qquad+N^4+4N^3-4N^2-16N+16.\label{eq:S-explicit}
\end{align}
For $N\ge6$, all three coefficients in \eqref{eq:S-explicit} are positive.  For $N=5$,
\[
 \mathcal S_5(r)=-39r^4+458r^2+961,
\]
which is also positive for $0\le r\le1$.  Hence $\mathcal S_N(r)\ge0$ for all $N\ge5$.  Since $\mathcal A_N\ge0$ and the second term in \eqref{eq:XiN} is also nonnegative, \eqref{eq:Sidentity} implies
\[
 \mathcal A_N(r)
 \ge r\sqrt{B_N(r)}\,\bigl(N^2(1+r^2)+4N\bigr),
\]
and therefore $Z_N(r)\ge0$.  Thus by \eqref{eq:Hplus-residual},
\begin{equation}\label{eq:Hplus-subsolution}
 (H_N^+)'-\frac{NrH_N^+-1}{1-r^2}\le0.
\end{equation}
Finally set $z=H_N^+-H_N$.  Combining \eqref{eq:H-ODE} and \eqref{eq:Hplus-subsolution} gives
\[
 z'-\frac{Nr}{1-r^2}z\le0.
\]
Multiplying by the integrating factor $(1-r^2)^{N/2}$, we obtain
\[
 \frac{d}{dr}\Big((1-r^2)^{N/2}z(r)\Big)\le0.
\]
Both $H_N$ and $H_N^+$ extend continuously to $r=1$ and satisfy
\[
 H_N(1)=H_N^+(1)=\frac1N.
\]
Thus, for $0<r<R<1$, integrate from $r$ to $R$ and then let $R\to 1$.  Since $z$ is bounded at $1$,
\[
 \lim_{R\to 1}(1-R^2)^{N/2}z(R)=0.
\]
Thus $(1-r^2)^{N/2}z(r)\ge0$, hence $z(r)\ge0$ on $[0,1)$. $z(1)\ge 0$ follows by continuity.  This proves \eqref{eq:HleHplus}, hence \eqref{eq:H-upper-needed}, and therefore \eqref{eq:high-pointwise}.  

Finally, Lemma \ref{lem:beta-positive} gives $\beta_N>0$ on $[0,1]$.  Therefore \eqref{eq:high-pointwise} implies $P_N>0$ on $[0,1]$.
\end{proof}

\begin{proof}[Proof of Proposition \ref{prop:power-bridge}]
By \eqref{eq:defect-D}, the defect is
\[
 D=A-C_NV=\int_Xw\beta_N(\rho)\,dv_g.
\]
By Lemma \ref{lem:beta-positive}, we have $D\ge0$.  Set
\begin{equation}\label{eq:s-def}
 s=\frac{C_NV}{A}\in(0,1].
\end{equation}
Using \eqref{eq: Detla f rho main} and Lemma \ref{lem:high-1D}, we obtain
\begin{equation}\label{eq:high-Delta-f}
 -\Delta_gf_N(\rho)=\beta_N(\rho)+wP_N(\rho),
\end{equation}
where $f_N$ is the function defined in \eqref{eq:high-f}.
Therefore, by the defect formula \eqref{eq:defect-D} and integration by parts,
\begin{align*}
 D+\int_XP_N(\rho)w^2\,dv_g
 &=\int_Xw(-\Delta_gf_N(\rho))\,dv_g\\
 &=\int_Xf_N(\rho)(-\Delta_gw)\,dv_g\\
 &=\frac{N}{(N-2)^2}\int_Xf_N(\rho)\rho |E|^2\,dv_g.
\end{align*}
Since
\[f_N(\rho) \leq f_N(1)=\frac{1}{N}\left(1-\frac{C_N}{N}\right),\]
we obtain 
\begin{equation}\label{eq: high-pointwise D}
D+\int_X P_N(\rho) w^2 d v_g \leq \frac{1-N^{-1}C_N}{(N-2)^2} Q.
\end{equation}
Let
\[
 \lambda_N=1-\frac{C_N}{N}\in(0,1).
\]
We note that \(0<C_N<N\). Indeed,
\[
  C_N^{-1}
  =
  K_N(0)
  =
  \frac12\int_0^1 u^{N/2-1}(1-u)^{-1/2}\,du
  >
  \frac12\int_0^1 u^{N/2-1}\,du
  =
  \frac1N.
\]
Then \(0<\lambda_N=1-C_N/N<1\).

Then by \eqref{eq: high-pointwise D} we have
\begin{equation}\label{eq:Q-D-B}
 Q\ge\frac{(N-2)^2}{\lambda_N}
 \left(D+\int_XP_N(\rho)w^2\,dv_g\right).
\end{equation}
On the other hand, by Cauchy inequality and \eqref{eq:high-pointwise} in Lemma \ref{lem:high-1D},
\begin{align*}
 D^2
 &=\left(\int_Xw\beta_N(\rho)\,dv_g\right)^2\\
 &\le\left(\int_XP_N(\rho)w^2\,dv_g\right)
    \left(\int_X\frac{\beta_N(\rho)^2}{P_N(\rho)}\,dv_g\right)\\
 &\le 2C_NV\int_XP_N(\rho)w^2\,dv_g.
\end{align*}
Thus
\begin{equation}\label{eq:B-lower}
 \int_XP_N(\rho)w^2\,dv_g\ge\frac{D^2}{2C_NV}.
\end{equation}
Combining \eqref{eq:Q-D-B} and \eqref{eq:B-lower}, and writing $D=A(1-s)$, $C_NV=As$ for $s\in (0, 1]$ defined in \eqref{eq:s-def}, we obtain
\begin{equation}\label{eq:Q-lower-x}
 \frac{Q}{A}
 \ge
 \frac{(N-2)^2}{2\lambda_N}\left(\frac1s-s\right).
\end{equation}
Since $N=n+1$, $N-2=n-1$.  Therefore
\begin{equation}\label{eq:delta-lower}
 \frac{2Q}{n(n-1)^2A}
 \ge
 \frac1{n\lambda_N}\left(\frac1s-s\right)
 \ge
 \frac1n\left(\frac1s-s\right).
\end{equation}
For $0<x\le1$ and $n\ge2$, we claim
\begin{equation}\label{eq:elementary-x}
 s^{2/n}\ge1-\frac1n\left(\frac1s-s\right).
\end{equation}
Indeed, after writing $s=e^{-t}$ with $t\geq 0$, \eqref{eq:elementary-x} becomes
\[
 e^{-2t/n}+\frac2n\sinh t\ge1,
\]
whose derivative is 
\[
\frac2n(\cosh t-e^{-2t/n})\ge0.
\]
Combining \eqref{eq:delta-lower} and \eqref{eq:elementary-x} yields
\[
 s^{2/n}\ge1-\frac{2Q}{n(n-1)^2A}.
\]
Finally, the Gursky--Han's identity \eqref{eq:GH-identity} yields
\[
\int_M R_\gamma d \sigma_\gamma=n(n-1) A-\frac{2}{n-1} Q.
\]
Dividing by $n(n-1)A$, we then have
\[
 \frac{\int_MR_\gamma\,d\sigma_\gamma}{n(n-1)A}=1-\frac{2Q}{n(n-1)^2A}.
\]
Since $s=C_N V/A$, this proves \eqref{eq:power-bridge}.
\end{proof}

\section{Proof of the main theorem}\label{sec:main-proof}

We prove Theorem \ref{thm:main} by combining Propositions \ref{lem:N3-compensated}-\ref{prop:power-bridge}. 
\subsection{The relative comparison inequality}
Recall from Section \ref{Sect:Introduction} that $N=n+1 \geq 3$, and the \textbf{type-I} Escobar-Yamabe compactification is normalized by
\[
R_g=N(N-1), \quad H_g=0.
\]
We write
\[
V=\operatorname{Vol}_g(X), \quad A=\operatorname{Vol}_g(M), \quad V_{+}=\operatorname{Vol}\left(\Sph_{+}^N\right), \quad A_{+}=\operatorname{Vol}\left(\Sph^n\right).
\]
Under this normalization,
\begin{equation}\label{eq: Yamabe with voulme}
\frac{Y_1(\barX, M,[g])}{Y_1\left(\Sph_{+}^N, \Sph^n,[g_{\Sph_{+}^N}]\right)}=\left(\frac{V}{V_{+}}\right)^{2 / N} .
\end{equation}
Therefore, to prove the inequality in Theorem \ref{thm:main}, it is enough to prove the volume lower bound
\begin{equation}\label{eq: volume lower bound}
V \geq V_{+}\left(\frac{Y(M,[h])}{Y\left(\Sph^n,\left[g_{\Sph^n}\right]\right)}\right)^{n / 2}.
\end{equation}
Indeed, raising \eqref{eq: volume lower bound} by the power $2 / N$ gives 
\[
\frac{Y_1(\barX, M,[g])}{Y_1\left(\Sph_{+}^N, \Sph^n,[g_{\Sph_{+}^N}]\right)} \geq\left(\frac{Y(M,[h])}{Y(\Sph^n,[g_{\Sph^n}])}\right)^{n/N} .
\]

By Propositions \ref{lem:N3-compensated}-\ref{prop:power-bridge}  we have
\[
 \left(\frac{C_NV}{A}\right)^{2/n}
 \ge
 \frac{\int_MR_\gamma\,d\sigma_\gamma}{n(n-1)A}.
\]
By definition of the Yamabe constant  we obtain
\[
 \int_MR_\gamma\,d\sigma_\gamma\ge Y(M,[h])A^{(n-2)/n}.
\]
Then,
\begin{equation}\label{eq: high D 2}
 \left(\frac{C_NV}{A}\right)^{2/n}
 \ge
 \frac{Y(M,[h])}{n(n-1)}A^{-2/n}.
\end{equation}
This implies
\begin{equation}\label{eq:high-CV-final}
 C_NV\ge\left(\frac{Y(M,[h])}{n(n-1)}\right)^{n/2}.
\end{equation}
Since
\[
 Y(\Sph^n)=n(n-1)A_+^{2/n},
\]
\eqref{eq:high-CV-final} together with $C_N=A_+/V_+$ gives
\begin{equation}\label{eq:high-volume-final}
 V\ge V_+\left(\frac{Y(M,[h])}{Y(\Sph^n, [g_{\Sph^n}])}\right)^{n/2}.
\end{equation}
Finally, by \eqref{eq: Yamabe with voulme} and \eqref{eq:high-volume-final} we obtain
\[
 \frac{Y_1(\barX,M,[g])}{Y_1(\Sph^N_+,\Sph^n,[g_{\Sph_+^N}])}
 =\left(\frac{V}{V_+}\right)^{2/N}
 \ge
 \left(\frac{Y(M,[h])}{Y(\Sph^n, [g_{\Sph^n}])}\right)^{n/N}.
\]
This proves the comparison inequality \eqref{eq:main-conjecture}.

\subsection{Rigidity}
We end  by proving the rigidity part. The standard hyperbolic space $(\mathbb{H}^N, g_{\mathbb{H}^N})$, together with its hemisphere compactification $(\Sph_+^N, g_{\Sph_+^N})$, clearly realizes the equality. So we only need to prove the converse.

Suppose the equality holds in \eqref{eq:main-conjecture}.  First consider $N=3$.  
 In the proof of Proposition \ref{lem:N3-compensated}, we actually obtain
\[
A-C_3 V \leq 3 f(1) Q, \quad 3 f(1)<1.
\]
Hence,
\[
C_3 V \geq A-3 f(1) Q=\frac{1}{2} \int_M R_\gamma d \sigma_\gamma+(1-3 f(1)) Q.
\]
Since $Y(M,[h])=\int_M R_\gamma d \sigma_\gamma$ in dimension two, the equality in the final comparison inequality \eqref{eq:main-conjecture} forces $Q=0$.

Next consider $N=4$. The proof of Proposition \ref{lem:N4-compensated} implies the stronger bound
\[
A-C_4 V \leq \frac{5}{32} Q.
\]
Since $\int_M R_\gamma d \sigma_\gamma=6 A-Q$, we have
\[
C_4 V \geq A-\frac{5}{32} Q=\frac{1}{6} \int_M R_\gamma d \sigma_\gamma+\frac{1}{96} Q.
\]
If $Q>0$, then the estimate \eqref{eq:N4-compensated} is strict, hence  \eqref{eq:main-conjecture} should be a strict inequality. Therefore the equality in \eqref{eq:main-conjecture} again forces $Q=0$.
 
Finally assume $N\ge5$. We recall the notation used in the proof of Proposition \ref{prop:power-bridge}:
\[
s=\frac{C_N V}{A} \in(0,1], \quad \lambda_N=1-\frac{C_N}{N} \in(0,1).
\]
From the proof of Proposition \ref{prop:power-bridge}, we obtain 
\begin{equation}\label{eq: rigidity high D 1}
\frac{2 Q}{n(n-1)^2 A} \geq \frac{1}{n \lambda_N}\left(\frac{1}{s}-s\right) \geq \frac{1}{n}\left(\frac{1}{s}-s\right),
\end{equation}
and 
\begin{equation}\label{eq: rigidity high D 2}
s^{2 / n} \geq 1-\frac{1}{n}\left(\frac{1}{s}-s\right).
\end{equation}
Combining \eqref{eq: rigidity high D 1} and \eqref{eq: rigidity high D 2} yields Proposition \ref{prop:power-bridge}. If the equality holds in the final comparison inequality \eqref{eq:main-conjecture}, then the equality must hold in Proposition \ref{prop:power-bridge} as well. Therefore, the equality must hold throughout the inequality chain
\[
s^{2 / n} \geq 1-\frac{1}{n}\left(\frac{1}{s}-s\right) \geq 1-\frac{2 Q}{n(n-1)^2 A}.
\]
However, since $\lambda_N \in (0,1)$, the two inequalities in \eqref{eq: rigidity high D 1} can both be equalities only if
\[
\frac{1}{s}-s=0.
\]
Thus $s=1$. Returning to the endpoint equality in Proposition
\ref{prop:power-bridge},
\[
  s^{2/n}
  =
  1-\frac{2Q}{n(n-1)^2A},
\]
we immediately obtain \(Q=0\).
Thus in all cases, the equality in the main inequality \eqref{eq:main-conjecture} forces 
\[
 Q=\int_X\rho |E|^2\,dv_g=0.
\]
Since $\rho>0$ in $X$, we have $E\equiv0$.  Therefore
\[
 \Ric_g=(N-1)g.
\]
Moreover, the equation for $w$ in \eqref{eq:Delta-U}  together with $E\equiv0$ implies $\Delta_g w=0$. Since $w|_M=0$,  we get $w\equiv0$. Thus,
\[
 \Delta\rho=-N\rho.
\]
\eqref{eq:E-hessian} then implies
\begin{equation}\label{eq:Obata}
\nabla^2\rho=-\rho g.
\end{equation}
We then need the following lemma.
\begin{lemma}\label{lem: reflection lemma}
Let $\left(X^N, g\right)$ be a smooth compact Einstein manifold with totally geodesic boundary $M= \partial X$. Let $(\hat{X}, \hat{g})$ be the double of $(X, g)$ across $M$. Then $\hat{g}$ is a smooth Einstein metric on the closed doubled manifold $\hat{X}$. 
\end{lemma}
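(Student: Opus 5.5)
The plan is to use Fermi (normal geodesic) coordinates near $M$ and show that the doubled metric is smooth up to every order. The key fact is that for an Einstein metric with totally geodesic boundary, all odd normal derivatives of the metric vanish on $M$.

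First, construct the smooth structure. Near $M$ take the normal exponential map $(x,t)\mapsto \exp_x(t\nu_{\text{in}})$ for $t\in[0,\varepsilon)$, in which
\[
g=dt^2+h_t,
\]
with $h_t$ a smooth family of metrics on $M$. The double $\hat X=X\cup_M X$ gets the smooth structure in which $(x,t)$, for $t\in(-\varepsilon,\varepsilon)$, is a chart; the second copy corresponds to $t<0$. Then $\hat g=dt^2+h_{|t|}$, which is automatically $C^{0}$ and Lipschitz, and it is smooth away from $M$. It is smooth across $M$ exactly when $\partial_t^{k}h_t|_{t=0}=0$ for all odd $k$.

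Second, derive the ODE for $h_t$. In these coordinates the second fundamental form is $L=-\tfrac12 h_t'$, and the Einstein equation $\Ric_g=\lambda g$ splits into the Gauss--Codazzi--Riccati system. The tangential components give
\[
h''_t=\mathcal F(h_t,h'_t),
\]
where
\[
\mathcal F(h,h')=2\Ric_{h}-2\lambda h+\tfrac12 h'h^{-1}h'-\tfrac12(\operatorname{tr}_h h')h'
\]
(up to the standard sign conventions). This is a second-order ODE in $t$ with values in symmetric tensors, whose coefficients involve tangential derivatives of $h$. It is invariant under $(t,h')\mapsto(-t,-h')$, since every term of $\mathcal F$ is even in $h'$. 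The key step is to show inductively that $\partial_t^{2m+1}h|_{t=0}=0$. The base case $h'_0=-2L=0$ is the totally geodesic assumption. For the inductive step, differentiate the ODE $2m-1$ times and evaluate at $t=0$: $\partial_t^{2m+1}h$ is a sum of products of $t$-derivatives of $h$ whose orders add up to an odd number. Such a product must contain at least one odd-order factor of order $\le 2m-1$, which vanishes by induction. One has to check that the tangential differential operator $\Ric_h$ preserves this parity; it does, because $\partial_t^{j}\Ric_{h_t}$ is a polynomial in tangential derivatives of $\partial_t^{i}h$ with $\sum i=j$.

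Third, conclude smoothness and the Einstein property. The vanishing of all odd derivatives shows that $h_{|t|}$ is $C^\infty$ in $t$ across $0$: every one-sided derivative of odd order is zero and every even-order one matches. Hence $\hat g$ is smooth, and it is Einstein on each half, so it is Einstein on $\hat X$ by continuity of curvature. Closedness of $\hat X$ is clear.

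The main obstacle is the parity bookkeeping in the inductive step. To avoid it I would try an alternative first. The double is $C^{1,1}$ (since $h'_0=0$) and is Einstein weakly across $M$. In harmonic coordinates the Einstein equation is a quasilinear elliptic system, so elliptic regularity bootstraps $C^{1,1}$ to $C^\infty$. This is cleaner provided one verifies that $\hat g$ is a weak solution across $M$; that follows since the metric is $C^{1}$, so no distributional curvature concentrates on $M$.
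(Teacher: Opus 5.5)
Your proof is correct, but your main route differs from the paper's. The paper's proof is essentially your fallback, with a sharper regularity count.

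\textbf{The paper's route.} It also works in Fermi coordinates and uses only $\partial_t h_t|_{t=0}=0$. This makes $h_{|t|}$, and hence $\hat g$, of class $C^{2,\alpha}$ across $M$. In fact it is $C^{2,1}$, since $\partial_t^2\bigl(h_{|t|}\bigr)=h''_{|t|}$ is continuous, so your ``$C^{1,1}$'' undersells it. Consequently $\Ric_{\hat g}$ is continuous and equals $\lambda\hat g$ on both sides, hence everywhere. Smoothness then comes from citing DeTurck--Kazdan: Einstein metrics are real analytic in harmonic coordinates.

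\textbf{Your route.} You instead show directly that the normal Taylor expansion of $h_t$ is even to all orders, using the tangential Einstein equation $h''=\mathcal F(h,h')$ as a recursion. Your parity bookkeeping is right. After differentiating $2m-1$ times, the right-hand side involves only $\partial_t^i h$ with $i\le 2m$, with odd total weight, so some factor has odd order at most $2m-1$ and vanishes by induction.

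A minor correction: for $g=dt^2+h_t$ one has $\Ric_g|_{TM}=\Ric_h-\tfrac12 h''+\tfrac12 h'h^{-1}h'-\tfrac14(\operatorname{tr}_h h')h'$. So the coefficient of $h'h^{-1}h'$ in $\mathcal F$ is $1$, not $\tfrac12$. Only the evenness in $h'$ matters, so this does not affect the argument.

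\textbf{What each buys.} Your argument is elementary and self-contained and needs no elliptic regularity theorem. It also proves smoothness directly in the Fermi chart that defines the smooth structure of $\hat X$. The paper's argument is shorter and needs only first-order boundary information. However, it gives smoothness in harmonic coordinates, which are a priori only $C^{3,\alpha}$ functions of $(x,t)$. Concluding that $\hat g$ is smooth for the double's own smooth structure needs a small extra step that the paper leaves implicit. For example, the reflection is an isometry, hence smooth in the harmonic atlas, so $M$ and its Fermi coordinates are smooth there.
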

\begin{proof}
In Fermi coordinates near $M$
\[
g=d t^2+h_t,
\]
where $t=d_g(\cdot, M) \in [0,\varepsilon)$ is the inward distance to $M$ and $h_t$ is a  family of smooth metrics on $M$. The second fundamental form of the hypersurface $M_{t_0}=\{t=t_0$\} is
\[
L_{t_0}=\frac{1}{2} \partial_t h_t|_{t=t_0}.
\]
Since the boundary $M$ is totally geodesic, we have
\[
L_0=0, \quad \text { hence }\left.\quad \partial_t h_t\right|_{t=0}=0.
\]
Now define
\[
\widehat{h}_t=\left\{\begin{array}{ll}
h_t, & t \geq 0, \\
h_{-t}, & t \leq 0,
\end{array} \quad \widehat{g}=d t^2+\widehat{h}_t\right..
\]
Because $h_t$ is smooth for $t \geq 0$ and $h_0^{\prime}=0$, Taylor expansion gives
\[
h_t=h_0+\frac{1}{2} h_0^{\prime \prime} t^2+O\left(t^3\right) .
\]
Therefore
\[
h_{|t|}=h_0+\frac{1}{2} h_0^{\prime \prime} t^2+O\left(|t|^3\right) .
\]
Thus the doubled metric is at least $C^{2, \alpha}$ across $M$ for every $0<\alpha<1$. 
Since $\hat{g} \in C^{2, \alpha}$, $\Ric_{\hat{g}}$ is $C^{0, \alpha}$. On each side of the doubled manifold,
\[
\operatorname{Ric}_{\hat{g}}=\lambda \hat{g}.
\]
Both sides are continuous up to the hypersurface $M$, so the same identity holds on the whole $\hat{X}$. Hence $\hat{g}$ is a $C^{2, \alpha}$ Einstein metric on the closed manifold $\hat{X}$.
Then by \cite[Theorem 5.2]{DeTurckKazdan1981}, the Einstein metric $\hat{g}$ is real analytic in harmonic coordinates. Therefore the doubled metric $\hat{g}$ is a smooth Einstein metric on $\hat{X}$.
\end{proof}
By Lemma \ref{lem: reflection lemma}, the doubled manifold $(\hat{X}, \hat{g})$ is a smooth closed Einstein manifold. Extending $\rho$ oddly across $M$ gives a nonconstant function on the doubled manifold $\hat{X}$, which is smooth away from the boundary $M$. From \cite[(3.9)]{GurskyHan}, we know that near $M$, $\rho$ has the expansion of the form
\[
\rho=r-\frac{1}{3} c_3 r^3+O\left(r^{3+\alpha}\right),
\]
where $r(x)=d_{g}(x, M)$ denotes the distance to the boundary $M$ and $c_3$ is a smooth function on $M$. Hence the odd extension of $\rho$ across $M$ yields a $C^{2,\alpha}$ function $\hat{\rho}$ on $\hat{X}$, which satisfies the same equation \eqref{eq:Obata}, i.e.
\[
 \nabla^2\hat{\rho}=-\hat{\rho} \hat{g}.
\]
Therefore, $\hat{\rho}$ also satisfies
\[
\Delta \hat{\rho} =-N \hat{\rho}.
\]
Hence, by standard elliptic regularity, we know that $\hat{\rho}$ is actually smooth.
By Obata's theorem \cite{Obata}, the double manifold $(\hat{X}, \hat{g})$ is isometric to the round sphere $\Sph^N$, with $\hat{\rho}$ being a first spherical harmonic on $\Sph^N$. Since $M=\{\hat{\rho}=0\}$, the hypersurface $M$ is the zero set of a first spherical harmonic $\hat{\rho}$, which is the great equator $\mathbb{S}^{N-1}$. Therefore, $X$ is one of the two hemispheres $(\Sph_+^N, g_{\Sph_+^N})$.

Therefore, $(X,g)$ is the hemisphere and $g_+=\rho^{-2}g$ is the standard hyperbolic metric.  This proves the rigidity and completes the proof of Theorem \ref{thm:main}.

\bibliographystyle{amsalpha}
\bibliography{conformal_filling}

@incollection{AndersonTopics,
  author    = {Anderson, Michael T.},
  title     = {Topics in conformally compact {Einstein} metrics},
  booktitle = {Perspectives in Riemannian Geometry},
  series    = {CRM Proceedings \& Lecture Notes},
  volume    = {40},
  pages     = {1--26},
  publisher = {American Mathematical Society},
  address   = {Providence, RI},
  year      = {2006}
}

@article{BrendleChen,
  author  = {Brendle, Simon and  Chen, Sophie},
  title   = {An existence theorem for the {Yamabe} problem on manifolds with boundary},
  journal = {Journal of the European Mathematical Society},
  volume  = {16},
  number  = {5},
  pages   = {991--1016},
  year    = {2014},
  doi     = {10.4171/JEMS/453}
}

@article{ChangGeCompactness,
  author  = {Chang, Sun-Yung Alice and Ge, Yuxin},
  title   = {Compactness of conformally compact {Einstein} manifolds in dimension {$4$}},
  journal = {Advances in Mathematics},
  volume  = {340},
  pages   = {588--652},
  year    = {2018},
  doi     = {10.1016/j.aim.2018.10.007}
}

@misc{ChangGeFilling,
  author        = {Chang, Sun-Yung Alice and Ge, Yuxin},
  title         = {On the problem of filling by a {Poincar{\'e}--Einstein} metric in dimension {$4$}},
  year          = {2025},
  eprint        = {2509.18430},
  archivePrefix = {arXiv},
  primaryClass  = {math.DG},
  note          = {preprint, arxiv:2509.18430}
}

@article{ChangGeQing,
  author  = {Chang, Sun-Yung Alice and Ge, Yuxin and Qing, Jie},
  title   = {Compactness of conformally compact {Einstein} {$4$}-manifolds {II}},
  journal = {Advances in Mathematics},
  volume  = {373},
  pages   = {107325},
  year    = {2020},
  doi     = {10.1016/j.aim.2020.107325}
}

@article{Escobar,
  author  = {Escobar, Jos{\'e} F.},
  title   = {The {Yamabe} problem on manifolds with boundary},
  journal = {Journal of Differential Geometry},
  volume  = {35},
  number  = {1},
  pages   = {21--84},
  year    = {1992},
  doi     = {10.4310/jdg/1214447805}
}

@article{GrahamLee,
  author  = {Graham, C. Robin and Lee, John M.},
  title   = {{Einstein} metrics with prescribed conformal infinity on the ball},
  journal = {Advances in Mathematics},
  volume  = {87},
  number  = {2},
  pages   = {186--225},
  year    = {1991},
  doi     = {10.1016/0001-8708(91)90071-E}
}

@article{GurskyHan,
  author  = {Gursky, Matthew J. and Han, Qing},
  title   = {Non-existence of {Poincar{\'e}--Einstein} manifolds with prescribed conformal infinity},
  journal = {Geometric and Functional Analysis},
  volume  = {27},
  number  = {4},
  pages   = {863--879},
  year    = {2017},
  doi     = {10.1007/s00039-017-0412-7},
}

@article{GurskyHanStolz,
  author  = {Gursky, Matthew J. and Han, Qing and Stolz, Stephan},
  title   = {An invariant related to the existence of conformally compact {Einstein} fillings},
  journal = {Transactions of the American Mathematical Society},
  volume  = {374},
  number  = {6},
  pages   = {4185--4205},
  year    = {2021},
  doi     = {10.1090/tran/8271},
}

@article{GurskySzekelyhidi,
  author  = {Gursky, Matthew J. and Sz{\'e}kelyhidi, G{\'a}bor},
  title   = {A local existence result for {Poincar{\'e}--Einstein} metrics},
  journal = {Advances in Mathematics},
  volume  = {361},
  pages   = {106912},
  year    = {2020},
  doi     = {10.1016/j.aim.2019.106912},
}

@article{LeeFredholm,
  author  = {Lee, John M.},
  title   = {Fredholm operators and {Einstein} metrics on conformally compact manifolds},
  journal = {Memoirs of the American Mathematical Society},
  volume  = {183},
  number  = {864},
  pages   = {vi+83},
  year    = {2006},
  doi     = {10.1090/memo/0864}
}

@article{LiQingShi,
  author  = {Li, Gang and Qing, Jie and Shi, Yuguang},
  title   = {Gap phenomena and curvature estimates for conformally compact {Einstein} manifolds},
  journal = {Transactions of the American Mathematical Society},
  volume  = {369},
  number  = {6},
  pages   = {4385--4413},
  year    = {2017},
  doi     = {10.1090/tran/6867},
}

@article{Obata,
  author  = {Obata, Morio},
  title   = {Certain conditions for a {Riemannian} manifold to be isometric with a sphere},
  journal = {Journal of the Mathematical Society of Japan},
  volume  = {14},
  number  = {3},
  pages   = {333--340},
  year    = {1962},
  doi     = {10.2969/jmsj/01430333}
}

@article{WangWang,
  author  = {Wang, Xiaodong and Wang, Zhixin},
  title   = {On a sharp inequality relating {Yamabe} invariants on a {Poincar{\'e}--Einstein} manifold},
  journal = {Proceedings of the American Mathematical Society},
  volume  = {150},
  number  = {11},
  pages   = {4923--4929},
  year    = {2022},
  doi     = {10.1090/proc/15967},
}

@misc{BrendleWang,
      title={A dimension descent scheme for the positive mass theorem in arbitrary dimension}, 
      author={S. Brendle and Y. Wang},
      year={2026},
      eprint={2604.08473},
    archivePrefix={arXiv},
      note={preprint, arxiv:2604.08473}, 
}

@article{ChenLaiWang,
  title={{Escobar-Yamabe} compactifications for {Poincar{\'e}-Einstein} manifolds and rigidity theorems},
  author={Chen, Xuezhang and Lai, Mijia and Wang, Fang},
  journal={Advances in Mathematics},
  volume={344},
  pages={681--714},
  year={2019},
  publisher={Elsevier},
  doi={10.1016/j.aim.2018.12.003},
  url={https://www.sciencedirect.com/science/article/pii/S0001870818304547}
}

@article{Witten,
    author = "Witten, Edward",
    title = "{Anti de Sitter space and holography}",
    journal = "Adv. Theor. Math. Phys.",
    volume = "2",
    pages = "253--291",
    year = "1998",
    eprint = "hep-th/9802150",
}

@inproceedings{Fefferman1985conformal,
  title={Conformal invariants},
  author={Fefferman, C. and Graham, C. R.},
  booktitle={{\'E}lie Cartan et les math{\'e}matiques d'aujourd'hui},
  pages={95--116},
  year={1985},
  series={Ast{\'e}risque},
  publisher={Soci{\'e}t{\'e} math{\'e}matique de France}
}

@article{Kichenassamy2004,
  title = {On a conjecture of {Fefferman and Graham}},
  author = {Kichenassamy, Satyanad},
  journal = {Advances in Mathematics},
  volume = {184},
  number = {2},
  pages = {268--288},
  year = {2004},
  issn = {0001-8708},
  doi = {https://doi.org},
  url = {https://www.sciencedirect.com/science/article/pii/S0001870803001452}
}

@article{Chang_2024,
  title={Perturbation compactness and uniqueness for a class of conformally compact {Einstein manifolds}},
  author={Chang, Sun-Yung A. and Ge, Yuxin and Jin, Xiaoshang and Qing, Jie},
  journal={Advances in Nonlinear Studies},
  volume={24},
  number={1},
  pages={247--278},
  year={2024},
  publisher={Walter de Gruyter GmbH}
}

@misc{lee2025rigidity,
  title         = {Rigidity of {P}oincar{\'e}-{E}instein manifolds with flat {E}uclidean conformal infinity}, 
  author        = {Lee, Sanghoon and Wang, Fang},
  year          = {2025},
  eprint        = {2503.06062},
  archivePrefix = {arXiv},
  primaryClass  = {math.DG},
  note          = {preprint, arxiv:2503.06062}
}

@article{wang2025sharp,
  title         = {A Sharp Inequality Relating {Yamabe} Invariants on Asymptotically {Poincare-{E}instein} Manifolds with a {Ricci} Curvature Lower Bound},
  author        = {Wang, Xiaodong and Wang, Zhixin},
  journal       = {Communications in Contemporary Mathematics},
  year          = {2025},
  doi           = {10.1142/S021919972350064X},
}

@article{wang2026lower,
  title     = {Lower Bounds for the Relative Volume of {P}oincar{\'e}-{E}instein Manifolds},
  author    = {Wang, Fang and Zhou, Huihuang},
  journal   = {The Journal of Geometric Analysis},
  volume    = {36},
  number    = {1},
  pages     = {32},
  year      = {2026},
  publisher = {Springer},
  doi       = {10.1007/s12220-025-02280-1},
}

@article{wang2023note,
  title     = {A Note on the Compactness of {P}oincar{\'e}--{E}instein Manifolds},
  author    = {Wang, Fang and Zhou, Huihuang},
  journal   = {Communications in Contemporary Mathematics},
  volume    = {25},
  number    = {05},
  pages     = {2250015},
  year      = {2023},
  publisher = {World Scientific},
  doi       = {10.1142/S0219199722500158},
}

@article{qing2003rigidity,
  title     = {On the Rigidity for Conformally Compact {E}instein Manifolds},
  author    = {Qing, Jie},
  journal   = {International Mathematics Research Notices},
  volume    = {2003},
  number    = {21},
  pages     = {1141--1153},
  year      = {2003},
  publisher = {Oxford University Press},
  doi       = {10.1155/S107379280320803X}
}

@article{Chang2025Poincare,
  title={On the {Poincar\'{e}-Einstein} Manifolds with Cylindrical Conformal Infinity},
  author={Chang, Sun-Yung Alice and Yang, Paul and Zhang, Ruobing},
  journal={arXiv:2509.20325},
  year={2025},
  url={https://arxiv.org/abs/2509.20325}
}

@article{DeTurckKazdan1981,
  title = {Some regularity theorems in {Riemannian} geometry},
  author = {DeTurck, Dennis M. and Kazdan, Jerry L.},
  journal = {Annales scientifiques de l'École Normale Supérieure},
  volume = {14},
  number = {3},
  pages = {249--260},
  year = {1981},
  publisher = {Elsevier}
}

\end{document}